\documentclass[onefignum,onetabnum]{siamart251216}

\usepackage{mathtools,bm,amsfonts,amssymb}
\usepackage{microtype}
\usepackage{graphicx}
\usepackage[percent]{overpic}
\usepackage{tikz}
\usetikzlibrary{patterns}
\usepackage{booktabs}
\usepackage{array}
\usepackage{enumitem}
\usepackage{xcolor}
\usepackage{comment}

\ifpdf
\hypersetup{
  pdftitle={Oblivious Subspace Injection Is Not Enough for Relative Error},
  pdfauthor={Alex Townsend and Chris Wang}
}
\fi

\allowdisplaybreaks

\crefname{theorem}{Theorem}{Theorems}
\crefname{proposition}{Proposition}{Propositions}
\crefname{corollary}{Corollary}{Corollaries}
\crefname{lemma}{Lemma}{Lemmas}
\crefname{definition}{Definition}{Definitions}
\crefname{remark}{Remark}{Remarks}
\crefname{section}{Section}{Sections}
\crefname{table}{Table}{Tables}
\crefname{figure}{Figure}{Figures}

\newcommand{\R}{\mathbb{R}}
\newcommand{\E}{\mathbb{E}}
\newcommand{\Prob}{\mathbb{P}}
\newcommand{\norm}[1]{\left\lVert #1 \right\rVert}

\newcommand{\range}{\operatorname{range}}
\newcommand{\diag}{\operatorname{diag}}
\newcommand{\rank}{\operatorname{rank}}
\newcommand{\argmin}{\operatorname*{argmin}}
\newcommand{\eps}{\varepsilon}

\headers{Oblivious Subspace Injection Is Not Enough for Relative Error}{Townsend and Wang}
\title{Oblivious Subspace Injection Is Not Enough for Relative Error\thanks{Submitted to the editors \today.
\funding{The work of A.~T. was supported by NSF CAREER (DMS-2045646) and by the Defense Advanced Research Projects Agency (DARPA) through The Right Space (TRS) Disruption Opportunity (DARPA-PA-24-04-07). The work of C.~W. was supported by the NSF GRFP under grant DGE-2139899.
}}}
\author{
Alex Townsend\thanks{Department of Mathematics, Cornell University, Ithaca, NY (\email{townsend@cornell.edu}, \email{cyw33@cornell.edu}).}
\and
Christopher Wang\footnotemark[2]
}

\begin{document}
\maketitle

\begin{abstract}
Oblivious subspace injection (OSI) was introduced by Cama\~no, Epperly, Meyer, and Tropp in 2025 as a much weaker sketching property than oblivious subspace embedding (OSE) that still yields constant-factor guarantees for randomized low-rank approximation and sketch-and-solve least-squares regression.  At the Simons Institute in Berkeley during a workshop in October 2025, it was asked whether OSIs also imply relative error bounds rather than just constant-factor guarantees.  We show that, from a theoretical standpoint, OSI alone does not yield OSE-style relative-error guarantees whose failure probability is controlled solely by the OSI failure parameter, even though OSI sketches often perform extremely well in practice.  We provide counterexamples showing this for sketch-and-solve least squares and for randomized SVD in the Frobenius norm.  The missing ingredient from a sketch satisfying only OSI is upper control on the optimal residual or tail component, and when one ensures the sketch has this additional property, a near-relative-error bound is recovered.  We also show that there is a natural $\ell_p$ analogue of OSI giving constant-factor sketch-and-solve bounds.  
\end{abstract}

\begin{keywords}
oblivious subspace injection, randomized numerical linear algebra, least-squares regression, randomized SVD, \texorpdfstring{$\ell_p$}{lp} regression
\end{keywords}

\begin{AMS}
65F20, 65F25, 65F55
\end{AMS}

\section{Introduction}
Randomized sketching is a central tool in modern numerical linear algebra for reducing the computational cost of large-scale problems; see, e.g., Mahoney~\cite{Mahoney2011}, Woodruff~\cite{Woodruff2014}, and Martinsson--Tropp~\cite{MartinssonTropp2020}.  The basic idea is to compress a high-dimensional dataset into a much smaller representation by multiplying it with a random sketching matrix.  If designed appropriately, this compressed representation preserves the essential geometric structure of the original problem, enabling fast approximate solutions with provable guarantees.  

For a large matrix $A\in\R^{n\times d}$ (with $n>d$) and a vector $b\in\R^n$,  instead of solving the least squares problem $x_\star \in \argmin_{x\in\R^d} \norm{Ax - b}_2$,  one forms a smaller sketched problem. Given a sketch $\Omega\in\R^{n\times k}$, the sketch-and-solve estimator is
\[
\widetilde{x}\in \argmin_{x\in\R^d} \norm{\Omega^\top (Ax-b)}_2.
\]
Likewise, for low-rank approximation, given $A\in\R^{n\times d}$, a target rank $r$, and a sketch $\Omega\in\R^{d\times k}$ with $k\ge r$, one forms the sample matrix $Y=A\Omega$ and the associated rangefinder approximation
\[
\widetilde A = (A\Omega)(A\Omega)^+A,
\]
where ${}^+$ denotes the Moore--Penrose pseudoinverse. The goal in both settings is to replace a large problem by a smaller one while still obtaining a high-quality approximation.

The classical theory of sketching is built on the notion of an oblivious subspace embedding (OSE). Fix integers $1 \le s \le n$ and $k\ge 1$, embedding parameters $\alpha \in (0,1]$ and $\beta\geq 1$, and a failure parameter $\rho \in [0,1)$. A random matrix $\Omega \in \R^{n \times k}$ is an $(s,\alpha,\beta,\rho)$-OSE if, for every $s$-dimensional subspace $V \subseteq \R^n$,
\[
\Prob\!\left\{
\alpha \norm{x}_2^2 \le \norm{\Omega^\top x}_2^2 \le \beta \norm{x}_2^2
\quad \text{for all } x \in V
\right\}
\ge 1-\rho.
\]
This property ensures that the sketch preserves the geometry of every low-dimensional subspace of the prescribed dimension. In particular, for sketch-and-solve least squares, one applies the OSE with $s=\dim\operatorname{span}(\range(A),b)\le d+1$, while for randomized low-rank approximation with target rank $r$, one applies it with $s=r+1$ to the augmented subspaces that arise in the analysis. See Sarl\'os~\cite{Sarlos2006}, Drineas--Mahoney--Muthukrishnan--Sarl\'os~\cite{DrineasMahoneyMuthukrishnanSarlos2011}, Rokhlin--Tygert~\cite{RokhlinTygert2008}, Clarkson--Woodruff~\cite{ClarksonWoodruff2013}, and Woodruff~\cite{Woodruff2014} for least squares, and Rokhlin--Szlam--Tygert~\cite{RokhlinSzlamTygert2009}, Halko--Martinsson--Tropp~\cite{HalkoMartinssonTropp2011}, Martinsson--Tropp~\cite{MartinssonTropp2020}, and Wang--Townsend~\cite{wang2026beyond} for randomized low-rank approximation. The resulting estimates take the form:
\[
\norm{A\widetilde{x} - b}_2 \leq \sqrt{\frac{\beta}{\alpha}}\norm{Ax_\star - b}_2, \qquad \|A - \widetilde{A}\|_F\leq \sqrt{\frac{\beta}{\alpha}}\norm{A - A_r}_F,
\]
where $A_r$ is the best rank-$r$ approximation to $A$, and $\alpha$ and $\beta$ are the OSE parameters.  In this paper, when we refer to OSE-style relative error, we mean a guarantee in which, when the sketch has near-isometric OSE parameters $\alpha = 1 - O(\eps)$, $\beta = 1 + O(\eps)$, and failure probability $\rho = O(\eps)$, the resulting approximation factor is $1 + O(\eps)$ and the overall failure probability is $O(\eps)$, with both depending only on the sketch parameters.

Cama\~no, Epperly, Meyer, and Tropp recently introduced a much weaker sketching property, called the oblivious subspace injection (OSI) property, and showed that it is sufficient for constant-factor guarantees in sketch-and-solve least squares and low-rank approximation via randomized SVD~\cite[Theorems~2.2 and~2.7]{CamanoEtAl2025}.  In other words, they proved that the sketched solutions were within a constant factor of optimal,  but their results do not identify a parameter regime in which that constant approaches $1$. 

\begin{definition}
Fix integers $1 \le s \le n$ and $k\ge 1$, an injectivity parameter $\alpha \in (0,1]$, and a failure parameter $\rho \in [0,1)$. A random matrix $\Omega \in \R^{n\times k}$ is an $(s,\alpha,\rho)$-OSI if:
\begin{enumerate}[label=(\roman*),leftmargin=1.8em]
    \item \textbf{Isotropy:} $\E[\Omega\Omega^\top] = I_n$.
    \item \textbf{Injectivity:} for every fixed $s$-dimensional subspace $V \subseteq \R^n$,
    \[
        \Prob\!\left\{ \norm{\Omega^\top x}_2^2 \ge \alpha\,\norm{x}_2^2
        \text{ for all } x \in V \right\} \ge 1-\rho.
    \]
\end{enumerate}
\label{def:OSI}
\end{definition}
\Cref{def:OSI} is very close to the one in~\cite{CamanoEtAl2025}. The only substantive difference is that we parameterize the failure probability by $\rho$, whereas their definition fixes the injectivity event to hold with probability at least $19/20$. Thus their $(s,\alpha)$-OSI is the special case of our $(s,\alpha,1/20)$-OSI.
For historical context, the open-problems note~\cite[\S5.1]{AmselEtAl2026} points to related one-sided conditions in earlier regression work, notably Drineas--Mahoney--Muthukrishnan--Sarl\'os~\cite{DrineasMahoneyMuthukrishnanSarlos2011}.

The OSI property is especially useful for structured random matrices, where the OSE property is difficult to guarantee.  The Cama\~no, Epperly, Meyer, and Tropp paper guarantees the OSI property for several fast sketches,  including sparse maps, subsampled randomized trigonometric transforms, and tensor-product constructions~\cite{CamanoEtAl2025}, whereas the previous literature had mostly focused on proving the OSE property with various degrees of success; see Nelson--Nguy$\tilde{\hat{\mathrm{e}}}$n~\cite{NelsonNguyen2013}, Ailon--Chazelle~\cite{AilonChazelle2009}, Tropp~\cite{Tropp2011}, and Pham--Pagh~\cite{PhamPagh2013}. More recently, tensor-train sketching has also been analyzed through the OSI lens~\cite{CazeauxDupuyJustiniano2026}.

In this paper, we show that OSI is not strong enough to imply OSE-style relative-error guarantees.  It turns out that the OSI property is significantly weaker than OSE (see~\cref{sec:OSI-implies-OSE}),  and that weakness is precisely what makes it possible to verify OSI for structured sketches in settings where proving a full OSE may be difficult or out of reach.  At the same time, our results show that this added flexibility comes at a price: OSI alone cannot guarantee relative error in the same way that OSE can when the failure probability is required to be controlled only by the OSI parameter. Indeed,  we find that relative-error bounds require upper control on the optimal residual in least squares or on the tail component in randomized SVD, whereas OSI provides only lower control on the range of $A$ together with isotropy in expectation.  Thus,  Cama\~no, Epperly, Meyer, and Tropp showed that constant-factor guarantees remain available far beyond the classical OSE setting~\cite{CamanoEtAl2025},  while our counterexamples show why there is still a genuine need for OSE-type upper control when relative-error bounds are the goal.

In practice,  however,  OSI-based sketches behave very similarly to OSE sketches (see~\cref{fig:osi-vs-ose-fixed-budget}), and often produce solutions of comparable quality.  This is consistent with the broader numerical experience with structured randomized least-squares solvers and low-rank sketching methods; see, e.g., Blendenpik~\cite{AvronMaymounkovToledo2010} and Tropp--Yurtsever--Udell--Cevher~\cite{TroppYurtseverUdellCevher2017}. The point of this paper is therefore not that OSI is ineffective in practice,  but rather that it is too weak to guarantee the same OSE-style relative error guarantees that are known under OSE assumptions (unless you assume a little bit more,  see~\cref{sec:positiveForLS,sec:positiveForrSVD}).

\begin{figure}[t]
\centering
\begin{overpic}[width=\linewidth]{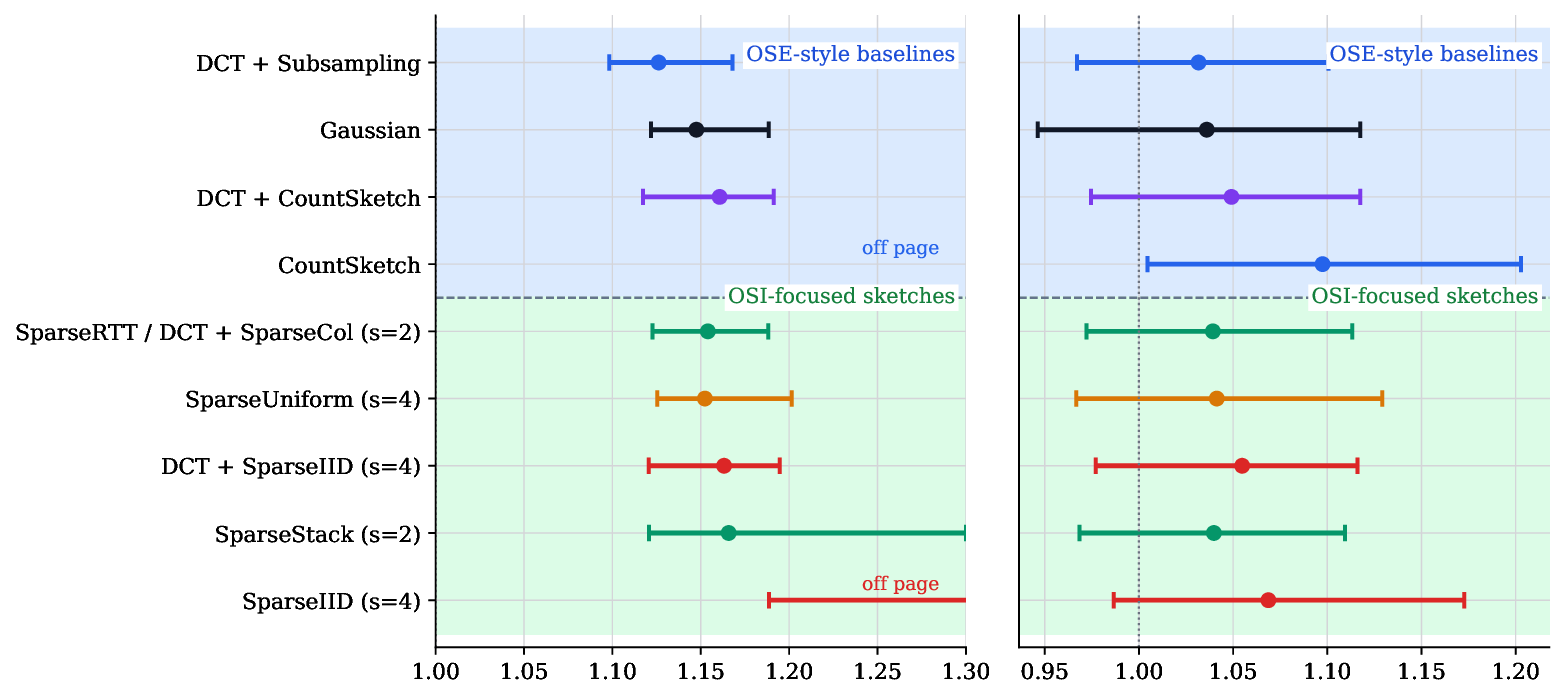}
\put(46,44){\makebox(0,0){\small Least squares}}
\put(82,44){\makebox(0,0){\small Randomized SVD}}
\put(46,-1){\makebox(0,0){\small $\norm{A\widetilde{x}-b}_2 / \norm{Ax_\star-b}_2$}}
\put(83,-1){\makebox(0,0){\small $\norm{A-\widetilde{A}}_F / \norm{A-A_r}_F$}}
\end{overpic}
\caption{Comparison of OSE and OSI sketches (see~\cite{CamanoEtAl2025} for details on the sketches), under a fixed sketching budget for regression and low-rank approximation,  where each dot marks the median over $100$ trials and each horizontal whisker marks the $10$th to $90$th percentile range.  Left: sketch-and-solve least squares with a matrix $A\in\R^{1024\times 64}$ with geometrically decaying singular values between $1$ and $0.12$, and $b=Ax_{\star}+e$, where $e\perp \range(A)$ is scaled so that $\norm{e}_2 = 0.2\,\norm{Ax_{\star}}_2/\sqrt{1024}$; the sketch dimension is $k=256$.  Right: randomized SVD on a matrix $A\in\R^{320\times 160}$ with target rank $r=10$ and exponentially decaying tail singular values,  using sketch size $k=r+p=15$ with oversampling $p=5$.  In both panels, the OSE methods are Gaussian or structured near-isometries, while the OSI methods are isotropic one-sided sketches.  In practice,  most OSE and OSI sketches give excellent relative approximations.}
\label{fig:osi-vs-ose-fixed-budget}
\end{figure}

The paper is organized as follows. In \Cref{sec:OSI-implies-OSE}, we show that OSI implies only a weak form of OSE, and that in the zero-failure regime $\rho=0$ the resulting upper-distortion bound is sharp.  In \Cref{sec:least-squares}, we prove that OSI does not imply OSE-style relative-error guarantees for sketch-and-solve least squares, even under stronger injectivity assumptions, and we also show how injectivity on the augmented space $\operatorname{span}(\range(A),b)$ recovers a relative bound. In \Cref{sec:rsvd}, we establish the analogous negative result for randomized SVD in the Frobenius norm and propose an extra assumption that recovers a relative-error guarantee. Finally, in \Cref{sec:lp}, we introduce a natural $\ell_p$ analogue of OSI and prove a constant-factor sketch-and-solve theorem for $\ell_p$ regression. 

\section{OSI Implies a Weak OSE}\label{sec:OSI-implies-OSE}

At first glance, OSI can seem less different from OSE than its definition suggests. Indeed, once one combines the injectivity part of OSI with isotropy, one finds that OSI does imply an OSE-type statement.  This raises a natural question: if OSI already implies an OSE-type statement, in what sense is OSI actually weaker? The answer is that the weakness lies in the parameters.  The upper-distortion bound obtained from isotropy is necessarily very coarse,  and the failure probability also deteriorates some.  Thus OSI implies only a weak OSE, not the near-isometric OSE needed for relative-error guarantees. The point of this section is to make that distinction precise and to show that, in the zero-failure regime $\rho=0$, the growth of the upper-distortion parameter is unavoidable. 

\begin{proposition}\label{prop:osi-markov-ose}
Let $\Omega\in\R^{n\times k}$ be an $(s,\alpha,\rho)$-OSI. Then, for every $0<\tau<1-\rho$, $\Omega$ is an $(s,\alpha,\alpha+s(1-\alpha+\alpha\rho)/\tau,\rho+\tau)$-OSE.
\end{proposition}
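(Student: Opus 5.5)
We will fix an arbitrary $s$-dimensional subspace $V\subseteq\R^n$ and an orthonormal basis $U\in\R^{n\times s}$ for it. The lower bound $\norm{\Omega^\top x}_2^2\ge\alpha\norm{x}_2^2$ on $V$ is exactly the injectivity event of \cref{def:OSI}; call it $\mathcal{E}$, so $\Prob(\mathcal{E}^c)\le\rho$. The upper bound is a statement about $\lambda_{\max}(U^\top\Omega\Omega^\top U)$, and the only other information available is isotropy. The plan is therefore a Markov-inequality argument applied to a quantity that is nonnegative on $\mathcal{E}$, followed by a union bound with $\mathcal{E}^c$.

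\textbf{Step 1: a nonnegative quantity.} Define the symmetric $s\times s$ matrix
\[
M = U^\top\Omega\Omega^\top U-\alpha I_s .
\]
On $\mathcal{E}$ the matrix $M$ is positive semidefinite. Hence $\lambda_{\max}(M)\le \operatorname{tr}(M)$ there, and $\operatorname{tr}(M)\mathbf 1_{\mathcal{E}}\ge 0$ always.

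\textbf{Step 2: bound its expectation.} Isotropy gives
\[
\E\operatorname{tr}(M)=\operatorname{tr}\bigl(U^\top \E[\Omega\Omega^\top]U\bigr)-\alpha s=s(1-\alpha).
\]
On $\mathcal{E}^c$ we still have $\operatorname{tr}(M)\ge-\alpha s$, because $U^\top\Omega\Omega^\top U\succeq 0$. Therefore
\[
\E\bigl[\operatorname{tr}(M)\mathbf 1_{\mathcal{E}}\bigr]=\E\operatorname{tr}(M)-\E\bigl[\operatorname{tr}(M)\mathbf 1_{\mathcal{E}^c}\bigr]\le s(1-\alpha)+\alpha s\,\Prob(\mathcal{E}^c)\le s(1-\alpha+\alpha\rho).
\]
This step is where the peculiar form $1-\alpha+\alpha\rho$ in the statement comes from. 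The main subtlety of the proof is to keep the indicator $\mathbf 1_{\mathcal{E}}$, so that Markov's inequality is applied to a genuinely nonnegative variable. Applying it naively to $\lambda_{\max}(M)$ is not possible, since $\lambda_{\max}(M)$ can be negative off $\mathcal{E}$.

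\textbf{Step 3: Markov and union bound.} By Markov's inequality,
\[
\Prob\bigl\{\operatorname{tr}(M)\mathbf 1_{\mathcal{E}}> s(1-\alpha+\alpha\rho)/\tau\bigr\}\le\tau .
\]
Outside this event and outside $\mathcal{E}^c$, we are on $\mathcal{E}$, so for every unit $x\in V$, writing $x=Uy$,
\[
\norm{\Omega^\top x}_2^2=\alpha+y^\top My\le\alpha+\lambda_{\max}(M)\le\alpha+\operatorname{tr}(M)\le \alpha+s(1-\alpha+\alpha\rho)/\tau .
\]
Both the lower and upper bounds therefore hold with probability at least $1-\rho-\tau$. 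This is positive because $\tau<1-\rho$, and it gives the claimed $(s,\alpha,\alpha+s(1-\alpha+\alpha\rho)/\tau,\rho+\tau)$-OSE property, since $V$ was arbitrary.
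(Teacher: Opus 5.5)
Your proof is correct and follows the paper's argument essentially line for line. Both apply Markov's inequality to the nonnegative variable $\operatorname{tr}(U^\top\Omega\Omega^\top U-\alpha I_s)\mathbf 1_{\mathcal E}$, whose mean is at most $s(1-\alpha+\alpha\rho)$ by isotropy, and then take a union bound with $\mathcal E^c$. The only cosmetic difference is that you use $\operatorname{tr}(M)\ge-\alpha s$ on $\mathcal E^c$, while the paper drops the nonnegative term $\operatorname{tr}(G)\mathbf 1_{E_V^c}$, and these give the same inequality.
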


\begin{proof}
Fix an $s$-dimensional subspace $V\subseteq\R^n$, and let
$U\in\R^{n\times s}$ have orthonormal columns spanning $V$.
Set
$
G = U^\top \Omega\Omega^\top U \in \R^{s\times s}.
$
Then $G\succeq 0$, and for every $x=Uy\in V$,
$
\norm{\Omega^\top x}_2^2 = y^\top G y.
$ By isotropy,
$
\mathbb E[G]
= U^\top \mathbb E[\Omega\Omega^\top]U
= U^\top I_n U
= I_s,
$
and hence
\[
\mathbb E[\operatorname{tr}(G)] = \operatorname{tr}(I_s)=s.
\]
Let
$
E_V=\{G\succeq \alpha I_s\}.
$
By the injectivity part of the OSI hypothesis, $\Prob(E_V)\ge 1-\rho$.  On $E_V$, the matrix $G-\alpha I_s$ is positive semidefinite, so
\[
\lambda_{\max}(G)=
\alpha+\lambda_{\max}(G-\alpha I_s)
\le
\alpha+\operatorname{tr}(G-\alpha I_s).
\]
Also,
\[
\E\!\left[(\operatorname{tr}(G)-\alpha s)\mathbf 1_{E_V}\right]
\le
\E[\operatorname{tr}(G)]-\alpha s\,\Prob(E_V)
\le
s-\alpha s(1-\rho)
=
s(1-\alpha+\alpha\rho).
\]
Therefore, for any $\tau\in(0,1-\rho)$, Markov's inequality gives
\[
\Prob\!\left(
E_V
\cap
\left\{
\operatorname{tr}(G)-\alpha s>
\frac{s(1-\alpha+\alpha\rho)}{\tau}
\right\}
\right)
\le \tau.
\]
Hence, with probability at least $1-\rho-\tau$,  $E_V$ holds and
\[
\operatorname{tr}(G)-\alpha s
\le
\frac{s(1-\alpha+\alpha\rho)}{\tau},
\qquad
\lambda_{\max}(G)\le \alpha+\frac{s(1-\alpha+\alpha\rho)}{\tau}.
\]
On this event we have 
\[
\begin{aligned}
\alpha \norm{x}_2^2
=
\alpha \norm{y}_2^2
\le
y^\top G y
=
\norm{\Omega^\top x}_2^2
& \le
\left(\alpha+\frac{s(1-\alpha+\alpha\rho)}{\tau}\right)\norm{y}_2^2\\
& =
\left(\alpha+\frac{s(1-\alpha+\alpha\rho)}{\tau}\right)\norm{x}_2^2,
\end{aligned}
\]
for every $x=Uy\in V$. Since $V$ was arbitrary, this is exactly the claimed OSE property.
\end{proof}

\Cref{prop:osi-markov-ose} shows that an OSI gives an OSE only with poor upper-distortion parameters. When $\alpha=1-\eps$ and $\rho=0$, the proposition yields
\[
\beta = 1-\eps+\frac{s\eps}{\tau}.
\]
Thus the excess distortion $\beta-\alpha$ is of order $s(1-\alpha)/\tau$. Even when $\alpha$ is close to $1$, a near-isometric OSE still requires $s(1-\alpha)/\tau$ to be small, which is much stronger than OSI alone provides. One can nevertheless obtain weak relative-error guarantees for OSIs using existing OSE guarantees, which come with an additional loss of control over the failure probability of such guarantees. For instance, if $\Omega$ is an $(s,1-\eps,O(\eps))$-OSI, then by taking $\tau=\Theta(\eps^{1/2})$ we have that $\Omega$ is also an $(s,1-\eps,1+O(s\eps^{1/2}),\Theta(\eps^{1/2}))$-OSE, which satisfies relative-error guarantees of the form
\[
\|A\widetilde{x}-b\|_2 \le (1+O(\eps^{1/2}))\|Ax_\star-b\|_2, \qquad \|A-\widetilde{A}\|_F \le (1+O(\eps^{1/2}))\|A-A_r\|_F
\]
with failure probability $\Theta(\eps^{1/2})$.  However, as we show in \Cref{sec:least-squares,sec:rsvd}, OSI alone is not strong enough to obtain $1+O(\eps)$ relative error with $O(\eps)$ failure probability.

Since the isotropy property of a sketch in~\cref{def:OSI} fixes only the average trace on an $s$-dimensional subspace,  one can concentrate that excess trace into one direction.  \Cref{prop:osi-markov-sharp} shows that this pathology actually occurs and the dependence on $s(1-\alpha)/\tau$ in \Cref{prop:osi-markov-ose} is sharp when $\rho=0$.
 
\begin{proposition}\label{prop:osi-markov-sharp}
Fix $s\ge 1$, $\alpha\in(0,1)$, and $q\in(0,1)$.
Then there exists an $(s,\alpha,0)$-OSI $\Omega$ such that
\[
\mathbb P\!\left\{
\sup_{x\neq 0}
\frac{\norm{\Omega^\top x}_2^2}{\norm{x}_2^2}
=
\alpha+\frac{s(1-\alpha)}{q}
\right\}
= q.
\]
\end{proposition}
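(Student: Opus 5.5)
The plan is to build the extremal sketch explicitly in ambient dimension $n=s$. The sketch will be a deterministic $\sqrt{\alpha}$-scaled identity, which gives injectivity with $\rho=0$ for free, plus a rare rank-one ``spike'' that carries all the remaining trace needed for isotropy. The proof of \Cref{prop:osi-markov-ose} already suggests this: the excess trace $s(1-\alpha)$ can be concentrated into a single direction and delivered only with probability $q$.

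\textbf{Construction.} Take $n=s$ and $k=s+1$. Let $\xi\sim\mathrm{Bernoulli}(q)$, and let $J$ be uniform on $\{1,\dots,s\}$, independent of $\xi$. Set $c=s(1-\alpha)/q$ and
\[
\Omega=\bigl[\,\sqrt{\alpha}\,I_s \;\; \xi\sqrt{c}\,e_J\,\bigr]\in\R^{s\times(s+1)},
\qquad\text{so that}\qquad
\Omega\Omega^\top=\alpha I_s+\xi c\, e_Je_J^\top .
\]

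\textbf{Isotropy.} Using independence and $\E[e_Je_J^\top]=I_s/s$,
\[
\E[\Omega\Omega^\top]=\alpha I_s+qc\,\tfrac{1}{s}I_s=\alpha I_s+(1-\alpha)I_s=I_s .
\]

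\textbf{Injectivity with $\rho=0$.} Since $\xi c\,e_Je_J^\top\succeq 0$ surely, we have $\Omega\Omega^\top\succeq\alpha I_s$ almost surely. Hence $\norm{\Omega^\top x}_2^2\ge\alpha\norm{x}_2^2$ for every $x\in\R^s$, and in particular for every $x$ in the only $s$-dimensional subspace, $V=\R^s$. So $\Omega$ is an $(s,\alpha,0)$-OSI.

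\textbf{The supremum.} The supremum equals $\lambda_{\max}(\Omega\Omega^\top)$.
\begin{itemize}[leftmargin=1.8em]
\item On $\{\xi=1\}$, the matrix is diagonal with entries $\alpha$ and $\alpha+c$, so $\lambda_{\max}=\alpha+s(1-\alpha)/q$.
\item On $\{\xi=0\}$, $\lambda_{\max}=\alpha$. This is strictly smaller, because $s(1-\alpha)/q>0$ when $\alpha<1$.
\end{itemize}
Therefore the event in the statement coincides exactly with $\{\xi=1\}$, which has probability $q$.

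There is no real obstacle here. The only points needing care are these:
\begin{itemize}[leftmargin=1.8em]
\item Checking that the two values of the supremum are distinct, so the probability is exactly $q$ rather than at least $q$. This is where $\alpha<1$ is used.
\item Noting that the proposition lets us choose $n$. Taking $n=s$ makes the spike's averaging over directions produce the factor $s$ rather than $n$.
\end{itemize}
If a larger ambient $n$ were required, I would instead spread the spike uniformly over $e_1,\dots,e_n$. That construction gives the bound with $n$ in place of $s$, so the choice $n=s$ is what delivers sharpness.
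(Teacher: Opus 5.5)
Your proposal is correct and is essentially the paper's proof: the paper uses the same random matrix $S=\alpha I_s+B\,\frac{s(1-\alpha)}{q}e_Je_J^\top$ with $n=s$, and differs only in taking $\Omega=S^{1/2}$ rather than your explicit $s\times(s+1)$ factor $[\sqrt{\alpha}\,I_s\ \ \xi\sqrt{c}\,e_J]$. Your check that the supremum equals $\alpha<\alpha+s(1-\alpha)/q$ on $\{\xi=0\}$, so that the probability is exactly $q$, is a detail the paper leaves implicit.
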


\begin{proof}
Take $n=s$. Let $J$ be uniform on $\{1,\dots,s\}$, let
$B\sim\mathrm{Bernoulli}(q)$, and assume $J$ and $B$ are independent.
Define the random positive semidefinite matrix
$
S
=
\alpha I_s
+
B\,\frac{s(1-\alpha)}{q}\, e_J e_J^\top,
$
and set
$
\Omega = S^{1/2}.
$
Then $\Omega\Omega^\top = S$.  First, $S\succeq \alpha I_s$ almost surely, so
\[
\norm{\Omega^\top x}_2^2 = x^\top S x \ge \alpha \norm{x}_2^2
\qquad x\in\R^s.
\]
Moreover,  we have
\[
\mathbb E[\Omega\Omega^\top]
=
\alpha I_s
+
q\cdot \frac{s(1-\alpha)}{q}\cdot
\mathbb E[e_J e_J^\top]
=
\alpha I_s
+
s(1-\alpha)\cdot \frac1s I_s
=
I_s,
\]
so $\Omega$ is an $(s,\alpha,0)$-OSI.  Finally, on the event $\{B=1\}$,
$
\Omega\Omega^\top =\alpha I_s+\frac{s(1-\alpha)}{q} e_J e_J^\top,
$
whose largest eigenvalue is
$
\lambda_{\max}(\Omega\Omega^\top)=\alpha+\frac{s(1-\alpha)}{q}.
$
This event has probability $q$, so
\[
\sup_{x\ne 0}\frac{\norm{\Omega^\top x}_2^2}{\norm{x}_2^2}
=
\alpha+\frac{s(1-\alpha)}{q}
\]
with probability exactly $q$.  
\end{proof}

Thus, OSI is substantially weaker than OSE and one cannot hope to obtain relative-error bounds for sketch-and-solve least squares or randomized SVD merely by passing from OSI to OSE.  Any such argument loses far too much in the parameters.

\section{Sketch-and-solve least squares with OSI}\label{sec:least-squares}
This section addresses the least-squares version of the main question: does OSI suffice to guarantee relative error for the sketch-and-solve estimator? This directly answers the open problem, labeled as Problem 5.1 in~\cite{AmselEtAl2026}.  

\begin{figure}[t]
\centering
\begin{overpic}[width=0.68\linewidth]{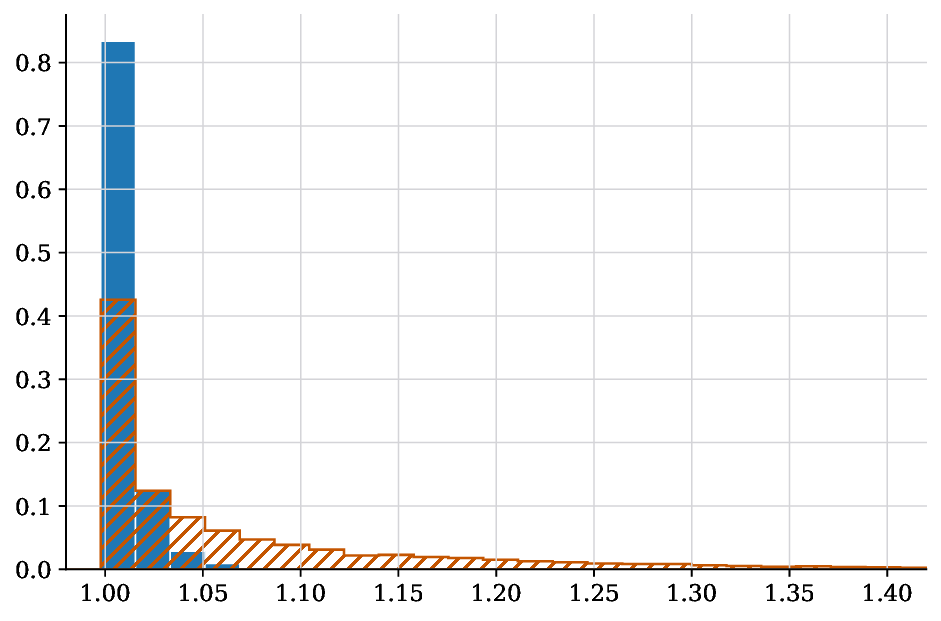}
\put(55,-2){\makebox(0,0){\small $\norm{A\widetilde x-b}_2 / \norm{A x_\star-b}_2$}}
\put(-2,30){\rotatebox{90}{\small density}}
\put(71,59){\tikz\fill[draw=white,fill={rgb,1:red,0.12;green,0.47;blue,0.71}] (0,0) rectangle (0.22,0.22);}
\put(76,59){\small OSE}
\put(71,53){\tikz\filldraw[pattern=north east lines,pattern color={rgb,1:red,0.77;green,0.33;blue,0.00},draw={rgb,1:red,0.77;green,0.33;blue,0.00}] (0,0) rectangle (0.22,0.22);}
\put(76,53){\small OSI}
\end{overpic}
\caption{Overlaid histograms for sketch-and-solve least squares on the toy problem $A = [1\ 0]^\top$ and $b = [0\ 1]^\top$ with 10,000 trials.  The blue histogram uses Gaussian sketches that behave as near-isometries, while the orange histogram uses an isotropic sketch family that satisfies $(1,\alpha,0)$-OSI with $\alpha=0.5$.  The OSI distribution causes a slightly heavier right tail.}
\label{fig:ls-osi-vs-ose-hist}
\end{figure}

In~\cref{fig:ls-osi-vs-ose-hist} we give a qualitative comparison between OSI and OSE sketches for regression.  The OSE sketch comes from a scaled Gaussian random matrix, which behaves like a near-isometry.  The OSI sketch comes from the continuous OSI family built from random positive semidefinite matrices
$
\Omega\Omega^\top = \alpha I_2 + Tuu^\top,
$
where $u$ is uniform on the unit circle and $T$ is an independent exponential random variable with mean $2(1-\alpha)$.  This sketch is isotropic and satisfies a $(1,\alpha,0)$-OSI,  but it has no comparable upper-distortion control. The OSI histogram has a slightly heavier right tail for the ratio $\norm{A\widetilde x-b}_2/\norm{A x_\star-b}_2$. 

\subsection{Counterexamples to relative accuracy for sketch-and-solve with OSI}

Since least-squares residuals live in the space $\operatorname{span}(\range(A),b)$,  one might initially hope that lower control on $\range(A)$ together with isotropy would already be enough to force the sketched problem to behave like the original one.  The results below show that this is false.  OSI controls the sketch on $\range(A)$ but it does not prevent the sketch from distorting the residual direction,  and that is exactly the direction that determines whether a relative-error guarantee can hold.

We begin with a minimal counterexample showing that even an $(1,1,\rho)$-OSI can incur a constant-factor loss on an event of probability $\rho$.  We then strengthen the construction substantially: even when the sketch is injective on every one-dimensional subspace,  so that the OSI failure probability is zero,  the sketch-and-solve estimator can still incur a constant-factor loss on an event of probability $\Omega(\eps)$. 

\begin{theorem}\label{thm:ls-counterexample}
For every $\rho \in (0,1)$, there exist a full-rank matrix $A\in\R^{2\times 1}$, a vector $b\in\R^2$, and an $(1,1,\rho)$-OSI $\Omega\in\R^{2\times 2}$ such that the sketch-and-solve estimator satisfies
\[
\frac{\norm{A\widetilde x-b}_2}{\min_{x\in\R} \norm{Ax-b}_2} = \begin{cases} 1, & \text{with probability $1-\rho$}, \\ \sqrt{2},& \text{with probability $\rho$}.
\end{cases} 
\]
\end{theorem}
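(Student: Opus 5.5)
We construct an explicit example on $\R^2$ and read off both properties directly from \Cref{def:OSI}. Take $A=e_1=[1\ 0]^\top$ and $b=e_2=[0\ 1]^\top$. Then $b\perp\range(A)$, so $x_\star=0$ and $\min_x\norm{Ax-b}_2=\norm{b}_2=1$. For any $x\in\R$ the residual is $Ax-b=(x,-1)^\top$, with $\norm{Ax-b}_2=\sqrt{x^2+1}$. The target ratio $\sqrt2$ is therefore attained exactly when $\widetilde x=\pm1$.

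The sketch is a mixture of the identity with a rank-one ``bad'' sketch.
\begin{itemize}[leftmargin=1.8em]
\item With probability $1-\rho$, set $\Omega=I_2$.
\item With probability $\rho$, draw an independent fair sign $\sigma\in\{\pm1\}$ and set
\[
\Omega=\begin{bmatrix}1&0\\ \sigma&0\end{bmatrix},\qquad\text{so that}\qquad \Omega\Omega^\top=\begin{bmatrix}1&\sigma\\ \sigma&1\end{bmatrix}.
\]
\end{itemize}

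\emph{Isotropy.} Averaging over $\sigma$ on the bad event kills the off-diagonal entries, so $\E[\Omega\Omega^\top]=(1-\rho)I_2+\rho I_2=I_2$.

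\emph{Injectivity with $\alpha=1$.} For every one-dimensional subspace $V$, the inequality $\norm{\Omega^\top x}_2^2\ge\norm{x}_2^2$ holds on $V$ whenever $\Omega=I_2$. That event has probability $1-\rho$. On the bad event the inequality may fail, for instance for $x=(1,-\sigma)^\top$, but \Cref{def:OSI} tolerates failure with probability $\rho$. Hence $\Omega$ is a $(1,1,\rho)$-OSI.

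\emph{Evaluating the estimator.} On the good event, the sketched problem is the original one, so $\widetilde x=0$ and the ratio is $1$. On the bad event,
\[
\Omega^\top(Ax-b)=\begin{bmatrix}1&\sigma\\0&0\end{bmatrix}\begin{bmatrix}x\\-1\end{bmatrix}=\begin{bmatrix}x-\sigma\\0\end{bmatrix}.
\]
The unique minimizer is therefore $\widetilde x=\sigma=\pm1$, and the ratio is $\sqrt{1+1}=\sqrt2$. This gives exactly the two-point distribution in the statement.

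The only delicate point is the design step. Isotropy forbids making the bad event a fixed non-identity matrix: if $\Omega=I_2$ on the good event, $\E[\Omega\Omega^\top]=I_2$ forces the bad-event average to be $I_2$ as well. The random sign $\sigma$ resolves this. It cancels the off-diagonal terms in expectation, while each realization still tilts the sketched objective, so that $\widetilde x=\sigma$ has the same residual norm.
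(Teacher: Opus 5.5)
Your proposal is correct and follows the paper's proof: it uses the same $A=e_1$, $b=e_2$, with $\Omega=I_2$ with probability $1-\rho$ and your sign-randomized rank-one sketch playing the role of the paper's $B_\pm$, each with probability $\rho/2$. Isotropy, injectivity on the identity event, and $\widetilde x=\pm1$ giving ratio $\sqrt{2}$ on the bad event are all verified the same way as in the paper.
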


\begin{proof}
Take
\[
A = \begin{bmatrix}1\\0\end{bmatrix},
\qquad
b = \begin{bmatrix}0\\1\end{bmatrix},
\qquad
B_+ = \begin{bmatrix}1&0\\1&0\end{bmatrix},
\qquad
B_- = \begin{bmatrix}1&0\\-1&0\end{bmatrix}.
\]
Define the random sketch by
\[
\Omega =
\begin{cases}
I_2, & \text{with probability } 1-\rho,\\[2mm]
B_+, & \text{with probability } \rho/2,\\[2mm]
B_-, & \text{with probability } \rho/2.
\end{cases}
\]
A direct computation gives
\[
\E[\Omega\Omega^\top]
=(1-\rho)I_2 + \frac{\rho}{2} B_+B_+^\top + \frac{\rho}{2} B_-B_-^\top
= I_2,
\]
so isotropy holds. For injectivity, it is clear that $\|\Omega^\top x\|_2^2=\|x\|_2^2$ with probability $1-\rho$, on the event $\{\Omega=I_2\}$, so $\Omega$ is a $(1,1,\rho)$-OSI.

Now
\[
\min_{x\in\R} \norm{Ax-b}_2^2 = \min_{x\in\R} (x^2+1)=1,
\qquad x_\star=0.
\]
If $\Omega=B_\pm$, then
\[
\Omega^\top A = \begin{bmatrix}1\\0\end{bmatrix},
\qquad
\Omega^\top b = \pm \begin{bmatrix}1\\0\end{bmatrix}.
\]
Hence the sketched least-squares problem is minimized at $\widetilde x = \pm 1$, and so
\[
A\widetilde x - b = \begin{bmatrix}\pm 1\\-1\end{bmatrix},
\qquad
\norm{A\widetilde x-b}_2^2 = 2.
\]
This bad event occurs with probability $\rho$.
\end{proof}

\Cref{thm:ls-counterexample} rules out OSE-style relative-error guarantees with controlled failure probability: in this example, the OSI fails to achieve relative-error guarantees with arbitrarily large probability $\rho$, regardless of how close the injectivity parameter is to 1.

Even a globally injective sketch can fail with probability $\Omega(\eps)$, as the next counterexample shows. 

\begin{theorem}\label{thm:ls-stronger}
For every $\eps\in (0,1)$ and $L\geq 1$, there exist $A\in\R^{2\times 1}$, $b\in\R^2$, and an $(1,1-\eps,0)$-OSI $\Omega\in\R^{2\times 3}$ such that the sketch-and-solve estimator satisfies
\[
\Prob\!\left\{ \norm{A\widetilde x-b}_2^2 \ge \left(1 + \frac{L^2}{(1+L)^2}\right) \min_{x\in\R} \norm{Ax-b}_2^2 \right\} \ge \frac{\eps}{2L}.
\]
\end{theorem}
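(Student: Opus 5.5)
The plan is to reuse the toy problem from \Cref{thm:ls-counterexample}, namely $A=[1\ 0]^\top$ and $b=[0\ 1]^\top$, for which $x_\star=0$ and $\min_x\norm{Ax-b}_2^2=1$. I will then build a sketch whose Gram matrix $S=\Omega\Omega^\top$ always dominates $(1-\eps)I_2$. This gives injectivity on every subspace with failure probability zero. The sketch will occasionally add one large rank-one spike along a diagonal direction, which tilts the sketched problem.

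\textbf{Solving the sketched problem for a general Gram matrix.} For $S\succ 0$, the sketched objective is
\[
\norm{\Omega^\top(Ax-b)}_2^2=(xe_1-e_2)^\top S(xe_1-e_2)=S_{11}x^2-2S_{12}x+S_{22}.
\]
It is minimized at $\widetilde x=S_{12}/S_{11}$, and the true residual is $\norm{A\widetilde x-b}_2^2=1+\widetilde x^2$. So it suffices to make $|\widetilde x|\ge L/(1+L)$ on an event of probability at least $\eps/(2L)$.

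\textbf{The construction.} Let $u_\pm=(1,\pm1)^\top/\sqrt2$ and $t=2L$. Define $\Omega\in\R^{2\times3}$ as follows:
\begin{itemize}
\item with probability $1-p$, set $\Omega=[\sqrt{1-\eps}\,I_2\ \ 0]$;
\item with probability $p/2$ each, set $\Omega=[\sqrt{1-\eps}\,I_2\ \ \sqrt t\,u_\pm]$.
\end{itemize}
Then $S=(1-\eps)I_2+t\,u_\pm u_\pm^\top$ in the spiked cases and $S=(1-\eps)I_2$ otherwise. Consequently $S\succeq(1-\eps)I_2$ almost surely, so injectivity holds with $\alpha=1-\eps$ and $\rho=0$.

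\textbf{Isotropy.} Averaging over the sign gives $\tfrac12(u_+u_+^\top+u_-u_-^\top)=\tfrac12 I_2$. Hence
\[
\E[S]=(1-\eps)I_2+pL\,I_2 .
\]
Choosing $p=\eps/L$ yields $\E[S]=I_2$. This choice is admissible, i.e.\ $p\le 1$, because $\eps<1\le L$.

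\textbf{The bad event.} On a spiked draw, $S_{12}=\pm t/2=\pm L$ and $S_{11}=1-\eps+L$. Therefore
\[
|\widetilde x|=\frac{L}{1-\eps+L}\ge\frac{L}{1+L},
\]
and so $\norm{A\widetilde x-b}_2^2\ge 1+L^2/(1+L)^2$. This event has probability $\eps/L\ge\eps/(2L)$, which proves the claim.

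The only delicate point is balancing the spike size against its probability so that isotropy holds exactly. The required lower bound on $|\widetilde x|$ forces $t\gtrsim L$, and isotropy then forces the spike probability to be about $\eps/L$. The slack factor of $2$ in the statement leaves room for rounding or alternative normalizations.
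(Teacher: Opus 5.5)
Your proof is correct and uses essentially the same construction as the paper: the same toy problem $A=[1\ 0]^\top$, $b=[0\ 1]^\top$; a Gram matrix equal to $(1-\eps)I_2$ plus a rank-one spike of size about $L$ along $(1,\pm1)^\top$, occurring with probability about $\eps/L$; and the same bad value $|\widetilde x| = L/(1-\eps+L)$. The only difference is how isotropy is balanced. You use a symmetric $\pm$ spike together with a no-spike outcome, whereas the paper pairs a rare large $(1,1)^\top$ spike with a frequent small $(1,-1)^\top$ spike; as a minor bonus in your version both spiked outcomes are bad, giving probability $\eps/L$ instead of $\eps/(2L)$.
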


\begin{proof}
Consider
\[
A = \begin{bmatrix}1\\0\end{bmatrix},
\qquad
b = \begin{bmatrix}0\\1\end{bmatrix}.
\]
Let $t = 2L/\eps>2$, and define a random vector $u\in\R^2$ by
\[
u_+ = \sqrt{\frac{t}{2}}\begin{bmatrix}1\\1\end{bmatrix},
\quad
u_- = \sqrt{\frac{t}{2(t-1)}}\begin{bmatrix}1\\-1\end{bmatrix},
\quad
u =
\begin{cases}
u_+, & \text{with probability } 1/t,\\[1mm]
u_-, & \text{with probability } 1-1/t.
\end{cases}
\]
Then
\[
\E[uu^\top]
= \frac{1}{t}\frac{t}{2}
\begin{bmatrix}1&1\\1&1\end{bmatrix}
+ \left(1-\frac{1}{t}\right)\frac{t}{2(t-1)}
\begin{bmatrix}1&-1\\-1&1\end{bmatrix}
= I_2.
\]
Now define
\[
\Omega = \bigl[\sqrt{1-\eps}\,I_2 \ \ \sqrt{\eps}\,u\bigr] \in \R^{2\times 3}.
\]
Because $\E[uu^\top]=I_2$, we have
\[
\E[\Omega\Omega^\top] = (1-\eps)I_2 + \eps\,\E[uu^\top] = I_2.
\]
Furthermore, for every $x\in\R^2$,
\[
\norm{\Omega^\top x}_2^2 = (1-\eps)\norm{x}_2^2 + \eps\,(u^\top x)^2 \ge (1-\eps)\norm{x}_2^2.
\]
Thus the injectivity inequality holds for \emph{every} vector, not just every one-dimensional subspace; in particular, $\Omega$ is an $(1,1-\eps,0)$-OSI.  Write $u=(g,s)^\top$. For any scalar $x$,
\[
Ax-b = \begin{bmatrix}x\\-1\end{bmatrix},
\qquad
\norm{\Omega^\top(Ax-b)}_2^2 = (1-\eps)(x^2+1) + \eps(gx-s)^2.
\]
Differentiating shows that the sketched objective is minimized at
\[
\widetilde x = \frac{\eps g s}{1-\eps + \eps g^2}.
\]
Hence
\[
\norm{A\widetilde x-b}_2^2 = 1+\widetilde x^2 = 1 + \frac{\eps^2 g^2 s^2}{(1-\eps+\eps g^2)^2}.
\]
On the event $\{u=u_+\}$ we have $g=s=\sqrt{t/2}$, and therefore
\[
\norm{A\widetilde x-b}_2^2
= 1 + \frac{(\eps t/2)^2}{(1-\eps+\eps t/2)^2} \ge 1+\frac{L^2}{(1+L)^2}
\]
since $\eps t/2= L$. Because $\min_x \norm{Ax-b}_2^2 = 1$, this gives
\[
\norm{A\widetilde x-b}_2^2 \ge \left(1 + \frac{L^2}{(1+L)^2}\right)\,\min_x \norm{Ax-b}_2^2
\qquad \text{on the event } \{u=u_+\}.
\]
Finally,
\[
\Prob\!\left\{u=u_+\right\} = \frac{1}{t} = \frac{\eps}{2L}.
\]
The final claim follows immediately. 
\end{proof}

\Cref{thm:ls-stronger} shows that even when the OSI failure probability is zero, one can still have a constant-factor loss on an event of probability $\Omega(\eps)$.  In particular, this rules out relative-error guarantees whose failure probability is controlled only by $\rho$, although it leaves open the possibility of such guarantees with fixed constant success probability or with failure probability of order $O(\eps)$.

The two counterexamples above exploit the same feature that relative-error regression depends on the geometry of $\operatorname{span}(\range(A),b)$,  but OSI controls only the lower singular behavior on $\range(A)$.  In other words, the sketch can leave $\range(A)$ intact while still distorting the optimal residual direction.  This points toward the natural remedy of imposing injectivity on the space $\operatorname{span}(\range(A),b)$.

\subsection{Injectivity on one extra dimension rescues relative error}\label{sec:positiveForLS}
When using the sketch-and-solve estimator on $Ax=b$,  we show that once a sketch is isotropic and injective on the subspace $\operatorname{span}(\range(A),b)$, the sketch-and-solve estimator satisfies a near-relative bound. 

Now,  injectivity on $\operatorname{span}(\range(A),b)$ controls the lower singular behavior of the sketch on all relevant directions, while isotropy controls the average size of the sketched optimal residual.  Together these are enough to rule out counterexamples.

\begin{proposition}
\label{prop:ls-p-one}
Let $A\in\R^{n\times d}$ be full rank,  $b\in\R^n$, and $U = \operatorname{span}(\range(A),b)$.  Suppose $\Omega$ satisfies $\E[\Omega\Omega^\top] = I_n$ and $\Prob\!\left\{
\norm{\Omega^\top u}_2^2 \ge \alpha \norm{u}_2^2\,\, \forall u \in U\right\}\ge 1 - \delta$.  Then,  for every $\eta\in(0,1)$, we have
\[
\Prob\!\left\{
\norm{A\widetilde x-b}_2^2
\le
\left(1+\frac{1-\alpha+\alpha\delta}{4\alpha\eta}\right)\norm{Ax_\star-b}_2^2
\right\}
\ge 1-\delta-\eta.
\]
If $\dim U<d+1$, extend $U$ to a $(d+1)$-dimensional subspace. Therefore, any $(d+1,\alpha,\delta)$-OSI sketch satisfies the assumptions of the proposition.
\end{proposition}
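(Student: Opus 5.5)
The plan is to decompose the sketched residual orthogonally and reduce everything to a Markov bound, in the spirit of \Cref{prop:osi-markov-ose}. Set $r = b - Ax_\star$, so $r\perp\range(A)$, and write $w = A(\widetilde x - x_\star)\in\range(A)$. Then $A\widetilde x - b = w - r$ with $w\perp r$, so
\[
\norm{A\widetilde x-b}_2^2 = \norm{r}_2^2 + \norm{w}_2^2 .
\]
It therefore suffices to show that, with probability at least $1-\delta-\eta$,
\[
\norm{w}_2^2 \le \frac{1-\alpha+\alpha\delta}{4\alpha\eta}\norm{r}_2^2 .
\]
Let $E$ be the injectivity event on $U$. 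On $E$, the map $\Omega^\top A$ is injective, so $\widetilde x$ is uniquely defined. If $\dim U<d+1$, the final remark follows by applying injectivity to any $(d+1)$-dimensional subspace containing $U$, since injectivity on the larger space restricts to $U$.

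\textbf{Step 1: a deterministic bound on $E$.} Let $Q$ have orthonormal columns spanning $U$, let $G = Q^\top\Omega\Omega^\top Q$, and set $M = G-\alpha I$, which is positive semidefinite on $E$. Identify $w$ and $r$ with their coordinates in $U$. The sketched normal equations say that $\Omega^\top(A\widetilde x - b)$ is orthogonal to $\Omega^\top\range(A)$, and $w\in\range(A)$. Hence
\[
w^\top G(w-r)=0, \qquad\text{i.e.}\qquad w^\top G w = w^\top G r .
\]
Substituting $G=\alpha I+M$ and using $w^\top r=0$ gives
\[
\alpha\norm{w}_2^2 + w^\top M w = w^\top M r .
\]
Because $M\succeq 0$, Cauchy--Schwarz in the $M$-semi-inner product and AM--GM give
\[
w^\top M r \le \sqrt{w^\top M w}\,\sqrt{r^\top M r} \le w^\top M w + \tfrac14 r^\top M r .
\]
Combining the two displays yields
\[
\alpha\norm{w}_2^2 \le \tfrac14\, r^\top M r = \tfrac14\bigl(\norm{\Omega^\top r}_2^2 - \alpha\norm{r}_2^2\bigr) =: \tfrac14 f .
\]
This is exactly where the constant $4$ in the statement comes from.

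\textbf{Step 2: Markov on $f$.} On $E$ we have $f\ge 0$. Since $f\mathbf 1_E \le \norm{\Omega^\top r}_2^2 - \alpha\norm{r}_2^2\mathbf 1_E$, isotropy gives
\[
\E[f\mathbf 1_E] \le \norm{r}_2^2 - \alpha(1-\delta)\norm{r}_2^2 = (1-\alpha+\alpha\delta)\norm{r}_2^2 .
\]
Markov's inequality then shows
\[
\Prob\bigl(E\cap\{f>(1-\alpha+\alpha\delta)\norm{r}_2^2/\eta\}\bigr)\le\eta .
\]
So with probability at least $1-\delta-\eta$, the event $E$ holds and $f\le(1-\alpha+\alpha\delta)\norm{r}_2^2/\eta$. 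On this event, Step 1 gives the required bound on $\norm{w}_2^2$, and the orthogonal decomposition finishes the proof.

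The main obstacle is Step 1: controlling the cross term $w^\top M r$ using only lower (one-sided) control of $G$. The device is to absorb the $w^\top M w$ part of the cross term into the left-hand side, which works only because $M\succeq 0$ on $E$. This is also why injectivity on all of $U$, not merely on $\range(A)$, is needed.
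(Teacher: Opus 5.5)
Your proof is correct and gives exactly the paper's bound. The skeleton is the same as the paper's: the Pythagorean split $\|A\widetilde x-b\|_2^2=\|r\|_2^2+\|w\|_2^2$, positivity of the sketched Gram matrix minus $\alpha I$ on the augmented space $U$, the deterministic estimate $\|w\|_2^2\le(\|\Omega^\top r\|_2^2-\alpha\|r\|_2^2)/(4\alpha)$, and Markov applied to this nonnegative quantity on $E$ via isotropy. Your deterministic step, however, is genuinely different. The paper solves the sketched normal equations explicitly, $e=\|r_\star\|_2M^{-1}g$, where $M=Q^\top\Omega\Omega^\top Q$ is the Gram matrix on $\range(A)$ alone. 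It then extracts $gg^\top\preceq(t-\alpha)(M-\alpha I_d)$ from a Schur complement and maximizes $(\lambda-\alpha)/\lambda^2$ over the spectrum of $M$ to produce $1/(4\alpha)$. You instead test the normal equations against $w$ itself and absorb the cross term via Cauchy--Schwarz in the $(G-\alpha I)$-seminorm together with $ab\le a^2+b^2/4$. So your factor $1/4$ comes from AM--GM rather than from a spectral optimization. Your route is shorter: it never inverts $M$ and needs no separate treatment of the degenerate case $t=\alpha$. The paper's route buys the exact identity $\|A\widetilde x-b\|_2^2=\|r_\star\|_2^2(1+g^\top M^{-2}g)$, which makes the extremal configuration transparent. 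That identity is reused verbatim for the terms $g_j^\top M^{-2}g_j=\|\omega_j^\top\Omega_1^+\|_2^2$ in \Cref{prop:rsvd-r-plus-one}. Your argument covers those terms too, since each is precisely your $\|w\|_2^2$ for the sketched regression of $v_j$ onto $\range(V_1)$, whose exact solution is $0$.
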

\begin{proof}
Let $A=QR$ be a reduced QR factorization, where $Q\in\R^{n\times d}$ has orthonormal
columns and $R\in\R^{d\times d}$ is invertible. Define
$
r_\star = (I-QQ^\top)b.
$
If $\norm{r_\star}_2=0$, then $b\in\range(A)$ and the claim is trivial. Hence assume $\norm{r_\star}_2>0$ and set
$
y = r_\star/\norm{r_\star}_2
$
Then $y\perp\range(Q)$ and
\[
b = Qz_\star + \norm{r_\star}_2 y,
\qquad
z_\star = Q^\top b.
\]
Because $R$ is invertible, minimizing $\norm{\Omega^\top(Ax-b)}_2$ over $x\in\R^d$ is
equivalent to minimizing
$
\norm{\Omega^\top(Qz-b)}_2
$
over $z\in\R^d$. Let $\widetilde z$ be a minimizer and write
$
e = \widetilde z-z_\star.
$
Then $\widetilde x = R^{-1}\widetilde z$ and
$
A\widetilde x-b = Qe-\norm{r_\star}_2 y.
$
Since $\widetilde z$ minimizes the sketched objective, the normal equations give
\[
Q^\top\Omega\Omega^\top(Qe-\norm{r_\star}_2 y)=0.
\]
Define
$
M = Q^\top\Omega\Omega^\top Q,$ 
$
g = Q^\top\Omega\Omega^\top y,$
$
t = y^\top\Omega\Omega^\top y.$
Then
$
Me = \norm{r_\star}_2 g.
$
Let $E$ denote the event that
\[
\norm{\Omega^\top u}_2^2 \ge \alpha\norm{u}_2^2
\qquad \text{for all } u\in U, 
\]
where $U = \operatorname{span}(\range(A),b) = \operatorname{span}(\range(Q),y)$.  On $E$, for every $v\in\R^d$, we have
$
v^\top Mv
=
\norm{\Omega^\top Qv}_2^2
\ge\alpha\norm{Qv}_2^2
=
\alpha\norm{v}_2^2.
$
Thus $M\succeq \alpha I_d$, so $M$ is invertible and
$
e = \norm{r_\star}_2 M^{-1}g.
$
Since $Qe\perp y$, we obtain
$
\norm{A\widetilde x-b}_2^2
=
\norm{Qe-\norm{r_\star}_2 y}_2^2
=
\norm{r_\star}_2^2+\norm{e}_2^2
=
\norm{r_\star}_2^2\bigl(1+g^\top M^{-2}g\bigr).
$
Now consider the block Gram matrix
\[
G =
\begin{bmatrix}
M & g\\
g^\top & t
\end{bmatrix}
=
\begin{bmatrix}Q & y\end{bmatrix}^\top
\Omega\Omega^\top
\begin{bmatrix}Q & y\end{bmatrix}.
\]
Because the columns of $[Q\ \ y]$ are orthonormal and $U=\operatorname{span}(\range(A),b)=\operatorname{span}(Q,y)$,
the event $E$ implies
$
G \succeq \alpha I_{d+1}.
$
Therefore
\[
\begin{bmatrix}
M- \alpha I_d & g\\
g^\top & t- \alpha 
\end{bmatrix}
\succeq 0.
\]
If $t= \alpha $, then positive definiteness forces $g=0$, and the desired bound is immediate. Otherwise,
the Schur complement gives
$
gg^\top \preceq \bigl(t- \alpha \bigr)\bigl(M- \alpha I_d\bigr).
$
Hence
\[
g^\top M^{-2}g
=
\lambda_{\max}\!\bigl(M^{-1}gg^\top M^{-1}\bigr)
\le
\bigl(t- \alpha \bigr)
\lambda_{\max}\!\bigl(M^{-1}(M- \alpha I_d)M^{-1}\bigr).
\]
If $\lambda\ge  \alpha $ is an eigenvalue of $M$, then the corresponding eigenvalue of
$M^{-1}(M- \alpha I_d)M^{-1}$ equals
$
(\lambda- \alpha )/\lambda^2,
$
which is maximized at $\lambda=2 \alpha $ with value $1/(4 \alpha )$. Therefore,
on $E$,
\[
g^\top M^{-2}g \le \frac{t- \alpha }{4 \alpha }.
\]
Substituting into the residual identity yields
\[
\norm{A\widetilde x-b}_2^2
\le
\left(1+\frac{t- \alpha }{4 \alpha }\right)\norm{r_\star}_2^2
\qquad \text{on } E.
\]
It remains to control $t$. Since $\norm{y}_2=1$ and $\E[\Omega\Omega^\top]=I_n$,
$
\E[t] = \E[y^\top\Omega\Omega^\top y] = 1.
$
Also, on $E$, we have $t\ge  \alpha $. Therefore
\[
\E\bigl[(t- \alpha )\mathbf 1_E\bigr]
=
\E[t\mathbf 1_E] -  \alpha \Prob(E)
\le
1- \alpha (1-\delta)
=
1- \alpha + \alpha \delta.
\]
By Markov's inequality, for every $\eta\in(0,1)$,
\[
\Prob\!\left(
E \cap
\left\{
t- \alpha  > \frac{1- \alpha + \alpha \delta}{\eta}
\right\}
\right)
\le \eta.
\]
Hence, with probability at least $1-\delta-\eta$,  $E$ holds and
$
t- \alpha  \le (1- \alpha + \alpha \delta)/\eta.
$
On this event,
\[
\norm{A\widetilde x-b}_2^2
\le
\left(1+\frac{1- \alpha + \alpha \delta}{4 \alpha \eta}\right)\norm{r_\star}_2^2.
\]
This proves the claim. 
\end{proof}

It is important to note that~\cref{prop:ls-p-one} assumes more than a $d$-dimensional OSI as we ask for injectivity on a space of dimension up to $d+1$.  Accordingly,~\cref{thm:ls-counterexample} does not contradict~\cref{prop:ls-p-one} as the sketch in the counterexample preserves $\range(A)$ but distorts the additional residual direction. 

If one takes $\delta=0$ and $\alpha=1-\eps$ in~\cref{prop:ls-p-one}, then we obtain
\[
\norm{A\widetilde x-b}_2^2
\le
\bigl(1+O(\eps/\eta)\bigr)\norm{Ax_\star-b}_2^2
\]
with probability at least $1-\eta$. Thus, for any fixed $\eta$, one obtains a $1+O(\eps)$ relative-error bound with constant success probability. However, if one insists on success probability tending to $1$ as $\eps\to 0$, then $\eta$ must also shrink, and the bound degrades accordingly. This leaves room for the $\Omega(\eps)$-probability constant-factor failure event exhibited in~\cref{thm:ls-stronger}.

\section{Randomized SVD with OSI}\label{sec:rsvd}

We now turn to the low-rank approximation analogue of the previous section and resolve Problem~5.2 from~\cite{AmselEtAl2026}. In the classical analysis of randomized SVD, relative Frobenius-error bounds come from controlling not only the leading right singular space, but also how the sketch interacts with the tail singular directions.  This is exactly why OSE is a natural sketching property: it provides two-sided control on the relevant subspaces and thereby bounds the interaction between the dominant and trailing components.

\begin{figure}[t]
\centering
\begin{overpic}[width=0.68\linewidth]{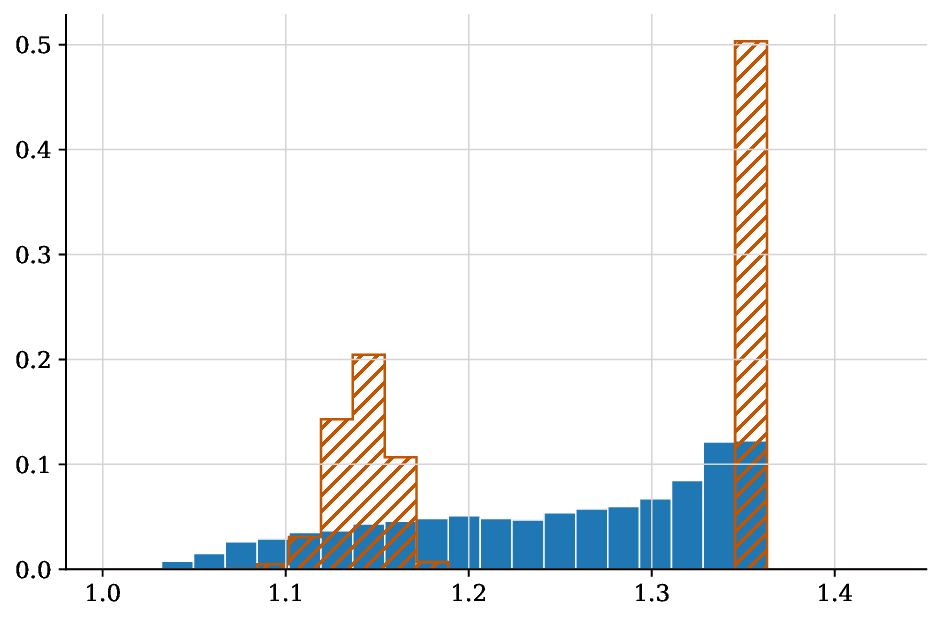}
\put(50,0){\makebox(0,0){\small $\|A-\widetilde A\|_F / \norm{A-A_1}_F$}}
\put(-2,27){\rotatebox{90}{\small density}}
\put(51,59){\tikz\fill[draw=white,fill={rgb,1:red,0.12;green,0.47;blue,0.71}] (0,0) rectangle (0.22,0.22);}
\put(56,59){\small OSE}
\put(51,53){\tikz\filldraw[pattern=north east lines,pattern color={rgb,1:red,0.77;green,0.33;blue,0.00},draw={rgb,1:red,0.77;green,0.33;blue,0.00}] (0,0) rectangle (0.22,0.22);}
\put(56,53){\small OSI}
\end{overpic}
\caption{Overlaid histograms for randomized SVD on the diagonal test matrix
$A=\diag(1,\tau,\ldots,\tau)\in\R^{30\times 30}$ with $\tau=0.2$, target rank $r=1$, and with 10,000 trials.  The blue histogram uses a single Gaussian sketch vector, while the orange histogram uses a sparse signed isotropic single-vector sketch with independent ternary entries.  Since the first coordinate of the OSI sketch vector is $0$ half the time, the dominate singular value is missed, causing a bimodal distribution. }
\label{fig:rsvd-osi-vs-ose-hist}
\end{figure}

In~\cref{fig:rsvd-osi-vs-ose-hist}, we show a histogram of the ratio $\|A-\widetilde A\|_F / \norm{A-A_1}_F$ obtained from randomized SVD on a larger diagonal matrix.  The blue histogram uses a single Gaussian sketch vector, whereas the orange histogram uses a sparse signed sketch vector whose coordinates are independent and take the values $\pm\sqrt{2}$ with probability $1/4$ each and $0$ with probability $1/2$.  This one-vector family is isotropic, but it has no comparable upper-distortion control.  The sparsity of the sparse signed sketch vector causes the histogram to be bimodal. 

\subsection{Counterexample to relative accuracy for randomized SVD with OSI}
Unfortunately,  even in the smallest nontrivial example,  an OSI can produce a randomized SVD approximation whose Frobenius error is separated from the optimum by a fixed constant factor.  Thus, just as in least squares, OSI does not provide enough control for relative accuracy.

\begin{theorem}\label{thm:rsvd-counterexample}
For every $\tau\in(0,1)$ there exist a matrix $A\in\R^{2\times 2}$ with target rank $r=1$ and an $(1,1,1/2)$-OSI $\Omega\in\R^{2\times 1}$ such that the randomized SVD approximation satisfies
\[
\|A-\widetilde A\|_F
= 
\sqrt{\frac{2}{1+\tau^2}}\min_{\rank(B)\le 1}\|A-B\|_F, \qquad \text{almost surely.}
\]
In particular, the ratio tends to $\sqrt{2}$ as $\tau\to 0$.
\end{theorem}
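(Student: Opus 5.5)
The plan is to build the counterexample explicitly in the smallest setting: take $A=\diag(1,\tau)$ and a single random sketch vector $\Omega=\omega\in\R^{2\times 1}$ that takes the two values $(1,1)^\top$ and $(1,-1)^\top$, each with probability $1/2$. Then $\min_{\rank(B)\le 1}\|A-B\|_F=\tau$. The whole proof is three routine checks: isotropy, $(1,1,1/2)$-injectivity, and an exact evaluation of the rank-one rangefinder error for both values of $\omega$.

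\emph{Isotropy.} Averaging gives
\[
\E[\omega\omega^\top]=\tfrac12\begin{bmatrix}1&1\\1&1\end{bmatrix}+\tfrac12\begin{bmatrix}1&-1\\-1&1\end{bmatrix}=I_2 .
\]

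\emph{Injectivity.} Fix a nonzero $x=(x_1,x_2)^\top$. The two possible values of $(\omega^\top x)^2$ are $(x_1+x_2)^2$ and $(x_1-x_2)^2$. These sum to $2\norm{x}_2^2$, so at least one of them is $\ge\norm{x}_2^2$. Hence $\Prob\{\norm{\Omega^\top x}_2^2\ge\norm{x}_2^2\}\ge 1/2$ for every one-dimensional subspace, and $\Omega$ is a $(1,1,1/2)$-OSI.

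\emph{Error computation.} Since $A\omega=(1,\pm\tau)^\top\neq 0$ almost surely, the rangefinder is the orthogonal projector $P=qq^\top$ with
\[
q=(1+\tau^2)^{-1/2}(1,\pm\tau)^\top .
\]
Using $\|A-PA\|_F^2=\|A\|_F^2-\|A^\top q\|_F^2$, we get
\[
\|A^\top q\|_2^2=\frac{1+\tau^4}{1+\tau^2},
\qquad
\|A-\widetilde A\|_F^2=1+\tau^2-\frac{1+\tau^4}{1+\tau^2}=\frac{2\tau^2}{1+\tau^2}.
\]
This holds for both values of $\omega$. Dividing by $\tau^2$ gives the ratio $\sqrt{2/(1+\tau^2)}$ almost surely, and letting $\tau\to0$ gives the limit $\sqrt2$.

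The only real obstacle is finding the construction, not verifying it. I found it by reverse-engineering. Requiring the error to equal the target value forces $q\propto(1,\pm\tau)$, which in turn forces $\omega\propto(1,\pm1)$. The normalization $\omega=(1,\pm1)$ is exactly what makes the sketch isotropic, and the identity $(x_1+x_2)^2+(x_1-x_2)^2=2\norm{x}_2^2$ is what gives injectivity with failure probability $1/2$. Once these choices are fixed, every step is a direct calculation.
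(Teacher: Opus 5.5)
Your proposal is correct and is essentially the paper's proof. It uses the same $A=\diag(1,\tau)$, the same sketch $\Omega=(1,\pm1)^\top$ with probability $1/2$ each, and the same isotropy and injectivity checks via $(x_1+x_2)^2+(x_1-x_2)^2=2\norm{x}_2^2$. The only cosmetic difference is that you evaluate the error through the Pythagorean identity $\|A-PA\|_F^2=\|A\|_F^2-\|A^\top q\|_2^2$, whereas the paper writes out $A-\widetilde A$ entrywise; both give $2\tau^2/(1+\tau^2)$.
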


\begin{proof}
Consider
\[
A = \diag(1,\tau),
\quad
\omega_+ = \begin{bmatrix}1\\1\end{bmatrix},
\quad
\omega_- = \begin{bmatrix}1\\-1\end{bmatrix},
\quad
\Omega=
\begin{cases}
\omega_+, & \text{with probability } 1/2,\\
\omega_-, & \text{with probability } 1/2.
\end{cases}
\]
Then
$
\E[\Omega\Omega^\top] = \tfrac12\omega_+\omega_+^\top + \tfrac12\omega_-\omega_-^\top = I_2,
$
so isotropy holds. To verify injectivity, fix any one-dimensional subspace $V=\operatorname{span}\{(x,y)^\top\}$. Since $(x+y)^2 + (x-y)^2 = 2(x^2+y^2)$, at least one of the two quantities $(x+y)^2$ and $(x-y)^2$ is at least $x^2+y^2$. Therefore, with probability at least $1/2$,
\[
\norm{\Omega^\top v}_2^2 \ge \norm{v}_2^2
\qquad \text{for every } v\in V,
\]
so $\Omega$ is an $(1,1,1/2)$-OSI.  Now fix either sign. Then
\[
Y = A\Omega = \begin{bmatrix}1\\ \pm \tau\end{bmatrix},
\qquad
\widetilde A = Y(Y^\top Y)^{-1}Y^\top A.
\]
Because $Y^\top Y = 1+\tau^2$, the orthogonal projector onto $\operatorname{span}(Y)$ is
\[
P_Y = \frac{1}{1+\tau^2}
\begin{bmatrix}
1 & \pm \tau\\
\pm \tau & \tau^2
\end{bmatrix},
\qquad
\widetilde A = P_YA.
\]
A direct computation yields
\[
A-\widetilde A
=
\frac{1}{1+\tau^2}
\begin{bmatrix}
\tau^2 & \mp \tau^2\\
\mp \tau & \tau
\end{bmatrix}.
\]
Hence
\[
\|A-\widetilde A\|_F^2
=
\frac{2\tau^4+2\tau^2}{(1+\tau^2)^2}
=
\frac{2\tau^2}{1+\tau^2}.
\]
On the other hand, the best rank-one approximation error is
\[
\min_{\rank(B)\le 1}\norm{A-B}_F^2 = \tau^2,
\]
by Eckart--Young.  The result follows.
\end{proof}

\Cref{thm:rsvd-counterexample} shows that OSI is too weak to guarantee relative Frobenius-error bounds for randomized SVD. The issue is analogous to the one in least squares,  but it appears here in a slightly different way.  OSI controls the sketch on the leading singular space,  yet it does not sufficiently control how the sketch mixes that space with the trailing singular directions.  As a result, the sketch can be injective where it needs to be while still producing a subspace that is misaligned enough to incur a fixed constant-factor loss.

This is also consistent with the standard deterministic analyses of randomized SVD~\cite{HalkoMartinssonTropp2011,MartinssonTropp2020}, which typically study quantities such as
\[
\Sigma_2(V_2^\top\Omega)(V_1^\top\Omega)^+
\]
where $A = U_1\Sigma_1V_1^\top + U_2\Sigma_2V_2^\top$ is a partitioned reduced SVD of $A$ (see~\cref{prop:rsvd-r-plus-one}).  Controlling this term requires more than one-sided injectivity on the dominant space.  \Cref{thm:rsvd-counterexample} shows that OSI leaves enough freedom in the sketch for this term to remain large, and hence for a relative-error guarantee to fail. In particular,  one cannot hope to prove relative accuracy for randomized SVD from OSI alone.

\subsection{Injectivity on the leading space plus each tail direction rescues relative error}\label{sec:positiveForrSVD}

The proof of~\cref{thm:rsvd-counterexample} also suggests what is missing from OSI.  For randomized SVD, it is not enough for the sketch to be injective on the dominant right singular space $\range(V_1)$; one must also control how each trailing singular direction interacts with that space.  The natural augmented subspaces are therefore
\[
W_j = \operatorname{span}(V_1,v_j),
\qquad j=r+1,\dots,q,
\]
where the columns of $V_1$ are the dominant $r$ right singular vectors of $A$ and $v_{r+1},\dots,v_q$ are the trailing right singular vectors.  Once injectivity holds simultaneously on these $(r+1)$-dimensional subspaces, isotropy again supplies the missing upper control in expectation, and a Markov argument yields a near-relative bound.

\begin{proposition}\label{prop:rsvd-r-plus-one}
Let $A\in\R^{n\times d}$ have rank $q>r$, and write a reduced singular value decomposition as
\[
A = U_1\Sigma_1V_1^\top + U_2\Sigma_2V_2^\top,
\]
where $\Sigma_1\in\R^{r\times r}$ contains the top $r$ singular values,
$
V_2 = [v_{r+1}\ \cdots\ v_q],
$
and
$
A_r = U_1\Sigma_1V_1^\top
$
is the best rank-$r$ approximation to $A$.  Let $\Omega\in\R^{d\times k}$, $k>r$, and set
$
\widetilde A = (A\Omega)(A\Omega)^+A.
$
Let $W_j=\operatorname{span}(V_1,v_j)$, $j=r+1,\dots,q$. Suppose $\E[\Omega\Omega^\top]=I_d$ and that, with probability at least $1-\delta$, $\norm{\Omega^\top x}_2^2 \ge \alpha \norm{x}_2^2$ holds for all $x\in W_j$ and every $j=r+1,\dots,q$. Then, for every $\eta\in(0,1)$,
\[
\Prob\!\left\{
\|A-\widetilde A\|_F^2
\le
\left(1+\frac{1-\alpha+\alpha\delta}{4\alpha\eta}\right)\norm{A-A_r}_F^2
\right\}
\ge 1-\delta-\eta.
\]
\end{proposition}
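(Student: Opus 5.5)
The plan is to reduce to the same Schur-complement argument used in \cref{prop:ls-p-one}, applied once for each tail direction $v_j$, and then finish with a single weighted Markov bound.

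\textbf{Step 1 (deterministic bound).} Set $\Omega_1 = V_1^\top\Omega\in\R^{r\times k}$ and $\Omega_2 = V_2^\top\Omega\in\R^{(q-r)\times k}$. Let $E$ be the event that $\norm{\Omega^\top x}_2^2\ge\alpha\norm{x}_2^2$ for all $x\in W_j$ and all $j$. On $E$, the matrix $M=\Omega_1\Omega_1^\top$ satisfies $M\succeq\alpha I_r$, so $\Omega_1$ has full row rank. We may then invoke the standard deterministic bound of Halko--Martinsson--Tropp~\cite{HalkoMartinssonTropp2011}:
\[
\|A-\widetilde A\|_F^2 \le \norm{\Sigma_2}_F^2 + \norm{\Sigma_2\Omega_2\Omega_1^+}_F^2 .
\]
The right singular directions orthogonal to $\range(V_1)\oplus\range(V_2)$ lie in the kernel of $A$, so they do not enter. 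We will quote this bound; re-deriving it (via $\widetilde A$ being the best approximation in $\range(A\Omega)$, and comparing with the candidate $A\Omega\,\Omega_1^+\Sigma_1^{-1}U_1^\top A$) is routine.

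\textbf{Step 2 (per-direction bound).} Write $\norm{\Sigma_2\Omega_2\Omega_1^+}_F^2=\sum_{j=r+1}^q\sigma_j^2\norm{v_j^\top\Omega\,\Omega_1^+}_2^2$. Since $\Omega_1^+=\Omega_1^\top M^{-1}$, row $j$ equals $g_j^\top M^{-1}$, where $g_j=V_1^\top\Omega\Omega^\top v_j$. Hence its squared norm is $g_j^\top M^{-2}g_j$. Put $t_j = v_j^\top\Omega\Omega^\top v_j$. Then the Gram matrix of $[V_1\ v_j]$ under $\Omega\Omega^\top$ is $\begin{bmatrix}M&g_j\\g_j^\top&t_j\end{bmatrix}$, and $E$ forces it to be $\succeq\alpha I_{r+1}$. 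The argument of \cref{prop:ls-p-one} then applies verbatim: the Schur complement gives $g_jg_j^\top\preceq(t_j-\alpha)(M-\alpha I)$, and maximizing $(\lambda-\alpha)/\lambda^2$ over $\lambda\ge\alpha$ gives
\[
g_j^\top M^{-2}g_j\le (t_j-\alpha)/(4\alpha).
\]
The degenerate case $t_j=\alpha$ forces $g_j=0$, so the bound still holds. Consequently, on $E$,
\[
\|A-\widetilde A\|_F^2\le \norm{\Sigma_2}_F^2+\frac{1}{4\alpha}Z,\qquad Z:=\sum_{j}\sigma_j^2(t_j-\alpha).
\]

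\textbf{Step 3 (Markov).} On $E$ each $t_j\ge\alpha$, so $Z\mathbf 1_E\ge0$. By isotropy $\E[t_j]=1$, hence
\[
\E[(t_j-\alpha)\mathbf 1_E]\le 1-\alpha(1-\delta)=1-\alpha+\alpha\delta.
\]
Summing with weights $\sigma_j^2$ gives $\E[Z\mathbf 1_E]\le(1-\alpha+\alpha\delta)\norm{\Sigma_2}_F^2$. Markov's inequality then shows that $E\cap\{Z>(1-\alpha+\alpha\delta)\norm{\Sigma_2}_F^2/\eta\}$ has probability at most $\eta$. With probability at least $1-\delta-\eta$, the event $E$ holds and $Z$ is below this threshold, which yields the claimed bound because $\norm{\Sigma_2}_F^2=\norm{A-A_r}_F^2$.

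\textbf{Main obstacle.} The delicate step is Step 1: making sure the deterministic bound is applied with exactly the factor-one form $\norm{\Sigma_2}_F^2+\norm{\Sigma_2\Omega_2\Omega_1^+}_F^2$, with no extra constants, under only full row rank of $\Omega_1$. If the citation is not in precisely this form, I would prove it directly: bound $\|(I-P_Y)A\|_F^2$ by the error of the specific approximant $Y\Omega_1^+\Sigma_1^{-1}U_1^\top A$ with $Y=A\Omega$, and use the orthogonality of $U_1$ and $U_2$ to split the resulting error into the two terms.
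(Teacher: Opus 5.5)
Your proposal is correct and follows the paper's proof essentially step for step: the Halko--Martinsson--Tropp deterministic bound, the per-direction Schur-complement estimate $g_j^\top M^{-2}g_j\le (t_j-\alpha)/(4\alpha)$, and a single Markov bound on the $\sigma_j^2$-weighted sum. Your $Z$ is just the paper's $(T-\alpha)\norm{\Sigma_2}_F^2$. You also invoke the HMT bound only on $E$, where $\Omega_1$ has full row rank, which is slightly more careful than the paper, since it quotes that bound before introducing $E$.
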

\begin{proof}

Set $\Omega_1 = V_1^\top\Omega \in \R^{r\times k}$ and $\Omega_2 = V_2^\top\Omega \in \R^{(q-r)\times k}$. From \cite[Theorem 9.1]{HalkoMartinssonTropp2011}, we have
\[
\|A-\widetilde A\|_F^2
\le
\norm{\Sigma_2}_F^2
+
\norm{\Sigma_2\Omega_2\Omega_1^+}_F^2.
\]
For each $j=r+1,\dots,q$, define
$
\omega_j^\top = v_j^\top\Omega \in \R^{1\times k},
$
so that the rows of $\Omega_2$ are $\omega_{r+1}^\top,\dots,\omega_q^\top$.  Then
\[
\norm{\Sigma_2\Omega_2\Omega_1^+}_F^2
=
\sum_{j=r+1}^q \sigma_j^2 \norm{\omega_j^\top\Omega_1^+}_2^2,
\]
where $\sigma_j$ is the $j$th singular value of $A$. 

We next bound the quantity on the right term-by-term. Let $E$ denote the event that
\[
\norm{\Omega^\top x}_2^2 \ge \alpha \norm{x}_2^2
\qquad \text{for all } x\in W_j,\ \ j=r+1,\dots,q.
\] Fix $j>r$, and set
$
M = V_1^\top\Omega\Omega^\top V_1 = \Omega_1\Omega_1^\top$, $g_j = V_1^\top\Omega\Omega^\top v_j = \Omega_1\omega_j$, and $ t_j = v_j^\top\Omega\Omega^\top v_j = \norm{\omega_j}_2^2$.  Consider the block Gram matrix
\[
G_j
=
\begin{bmatrix}
M & g_j\\
g_j^\top & t_j
\end{bmatrix}
=
\begin{bmatrix}
V_1 & v_j
\end{bmatrix}^\top
\Omega\Omega^\top
\begin{bmatrix}
V_1 & v_j
\end{bmatrix}.
\]
Because the columns of $[V_1\ \ v_j]$ are orthonormal and
$
W_j = \operatorname{span}(V_1,v_j),
$
the event $E$ implies
$
G_j \succeq \alpha I_{r+1},
$
hence
\[
\begin{bmatrix}
M-\alpha I_r & g_j\\
g_j^\top & t_j-\alpha
\end{bmatrix}
\succeq 0.
\]
If $t_j=\alpha$, then positivity forces $g_j=0$.  Otherwise, the Schur complement gives
\[
g_jg_j^\top \preceq (t_j-\alpha)(M-\alpha I_r).
\]
Since $\Omega_1^+=\Omega_1^\top(\Omega_1\Omega_1^\top)^{-1}=\Omega_1^\top M^{-1}$ on $E$, we have
\[
\norm{\omega_j^\top\Omega_1^+}_2^2
=
g_j^\top M^{-2}g_j
\le
(t_j-\alpha)\,
\lambda_{\max}\!\bigl(M^{-1}(M-\alpha I_r)M^{-1}\bigr).
\]
If $\lambda\ge \alpha$ is an eigenvalue of $M$, then the corresponding eigenvalue of
$
M^{-1}(M-\alpha I_r)M^{-1}
$
equals
$
(\lambda-\alpha)/\lambda^2,
$
which is maximized at $\lambda=2\alpha$ and has value $1/(4\alpha)$.  Therefore,
\[
\norm{\omega_j^\top\Omega_1^+}_2^2 \le \frac{t_j-\alpha}{4\alpha}
\qquad \text{on } E.
\]
Summing over $j$ yields
\[
\norm{\Sigma_2\Omega_2\Omega_1^+}_F^2
\le
\frac{1}{4\alpha}\sum_{j=r+1}^q \sigma_j^2(t_j-\alpha) = \frac{T-\alpha}{4\alpha}\,\norm{\Sigma_2}_F^2,\qquad T
=
\frac{1}{\norm{\Sigma_2}_F^2}
\sum_{j=r+1}^q \sigma_j^2 t_j,
\]
hence
\[
\|A-\widetilde A\|_F^2
\le
\left(1+\frac{T-\alpha}{4\alpha}\right)\norm{\Sigma_2}_F^2
\qquad \text{on } E.
\]

It remains to control the random variable $T$.  By isotropy,
$
\E[t_j]
=
\E[v_j^\top\Omega\Omega^\top v_j]
=
1$ for each $j=r+1,\dots,q$,  so
\[
\E[T]
=
\frac{1}{\norm{\Sigma_2}_F^2}
\sum_{j=r+1}^q \sigma_j^2 \E[t_j]
=
1.
\]
Also, on $E$, since $v_j\in W_j$, we have $t_j\ge \alpha$ for every $j>r$, and thus $T\ge \alpha$. Therefore
\[
\E\bigl[(T-\alpha)\mathbf 1_E\bigr]
=
\E[T\mathbf 1_E]-\alpha\Prob(E)
\le
1-\alpha(1-\delta)
=
1-\alpha+\alpha\delta.
\]
By Markov's inequality, for every $\eta\in(0,1)$,
\[
\Prob\!\left(
E \cap
\left\{
T-\alpha > \frac{1-\alpha+\alpha\delta}{\eta}
\right\}
\right)
\le \eta.
\]
Hence, with probability at least $1-\delta-\eta$,  $E$ holds and
$
T-\alpha \le (1-\alpha+\alpha\delta)/\eta.
$
On this event,
\[
\|A-\widetilde A\|_F^2
\le
\left(1+\frac{1-\alpha+\alpha\delta}{4\alpha\eta}\right)\norm{\Sigma_2}_F^2.
\]
The result follows from Eckart--Young as 
$
\norm{\Sigma_2}_F = \norm{A-A_r}_F.
$
\end{proof}

\Cref{prop:rsvd-r-plus-one} shows that injectivity on each augmented subspace $W_j$ rules out the counterexample in \Cref{thm:rsvd-counterexample}. In this sense, the missing ingredient for relative Frobenius error is again upper control on the tail, and isotropy supplies that upper control only after the relevant lower injectivity has been imposed.

\begin{corollary}\label{cor:rsvd-r-plus-one}
Let $A\in\R^{n\times d}$ have rank $q>r$, and suppose that $(q-r)\rho<1$.  If $\Omega\in\R^{d\times k}$ is an $(r+1,\alpha,\rho)$-OSI, then, for every $\eta\in(0,1)$,
\[
\Prob\!\left\{
\|A-\widetilde A\|_F^2
\le
\left(1+\frac{1-\alpha+\alpha(q-r)\rho}{4\alpha\eta}\right)\norm{A-A_r}_F^2
\right\}
\ge
1-(q-r)\rho-\eta.
\]
\end{corollary}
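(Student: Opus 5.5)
The corollary follows by applying \cref{prop:rsvd-r-plus-one} with $\delta=(q-r)\rho$. So it suffices to check the proposition's hypotheses for an $(r+1,\alpha,\rho)$-OSI, and the only real work is a union bound over the $q-r$ augmented subspaces.

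Isotropy $\E[\Omega\Omega^\top]=I_d$ is part of \cref{def:OSI}, so it carries over directly.

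For the injectivity hypothesis, each $W_j=\operatorname{span}(V_1,v_j)$ for $j=r+1,\dots,q$ is exactly $(r+1)$-dimensional. This holds because $v_j$ is orthonormal to the columns of $V_1$ in the SVD. Each $W_j$ is also a fixed, deterministic subspace: it depends only on $A$, not on $\Omega$. Let $F_j$ be the event that $\norm{\Omega^\top x}_2^2<\alpha\norm{x}_2^2$ for some $x\in W_j$. The injectivity part of \cref{def:OSI} applied to $W_j$ gives $\Prob(F_j)\le\rho$. By the union bound,
\[
\Prob\Bigl(\bigcup_{j=r+1}^q F_j\Bigr)\le (q-r)\rho .
\]
Hence, with probability at least $1-\delta$ where $\delta=(q-r)\rho$, injectivity holds simultaneously on every $W_j$. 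This is precisely the hypothesis of \cref{prop:rsvd-r-plus-one}. The assumption $(q-r)\rho<1$ guarantees $\delta\in[0,1)$, so the resulting probability statement is nonvacuous.

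Substituting $\delta=(q-r)\rho$ into the conclusion of \cref{prop:rsvd-r-plus-one} gives the approximation factor $1+\frac{1-\alpha+\alpha(q-r)\rho}{4\alpha\eta}$ and success probability $1-(q-r)\rho-\eta$, which is the stated bound.

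There is no genuine obstacle. The only points needing care are that the $W_j$ are chosen independently of $\Omega$, so the OSI definition applies to each, and that each $W_j$ has dimension exactly $r+1$. If the OSI definition were stated only for dimension exactly $s$, this dimension check is what licenses applying it.
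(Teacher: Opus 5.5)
Your proposal is correct and is essentially the paper's proof: apply the OSI injectivity to each fixed $(r+1)$-dimensional subspace $W_j$, take a union bound over $j=r+1,\dots,q$ to get the simultaneous event with probability at least $1-(q-r)\rho$, and invoke \cref{prop:rsvd-r-plus-one} with $\delta=(q-r)\rho$. One small point that neither you nor the paper states is that the proposition's hypothesis $k>r$ holds automatically here: injectivity on an $(r+1)$-dimensional subspace with probability $1-\rho>0$ forces $\Omega^\top$ to have rank at least $r+1$, so $k\ge r+1$.
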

\begin{proof}
For each $j=r+1,\dots,q$, the space $W_j$ is $(r+1)$-dimensional.  By the OSI hypothesis,
\[
\Prob\!\left\{
\norm{\Omega^\top x}_2^2 \ge \alpha \norm{x}_2^2
\text{ for all } x\in W_j
\right\}
\ge 1-\rho.
\]
A union bound shows that the simultaneous event in \Cref{prop:rsvd-r-plus-one} holds with probability at least
$
1-(q-r)\rho.
$
Applying \Cref{prop:rsvd-r-plus-one} with $\delta=(q-r)\rho$ gives the result.
\end{proof}

If $\rho=0$ and $\alpha=1-\eps$ in \Cref{cor:rsvd-r-plus-one}, then for every fixed $\eta\in(0,1)$,
\[
\|A-\widetilde A\|_F^2
\le
\bigl(1+O(\eps/\eta)\bigr)\norm{A-A_r}_F^2
\]
with probability at least $1-\eta$.  Thus, just as in least squares, a near-relative-error bound is recovered once the sketch is injective on the relevant augmented spaces.

\section{An OSI analogue for $\ell_p$ regression}\label{sec:lp}
There is one more open problem in~\cite{AmselEtAl2026} regarding OSI,  labeled as Problem~5.3, and for completeness we answer it here.  Problem 5.3 is about solving $\ell_p$ regression using sketching,  so throughout we fix $1\le p<\infty$.  Here, one wishes to solve the $\ell_p$ regression problem, i.e., 
\[
x_\star \in \argmin_{x\in\R^d} \norm{A x - b}_p,
\]
where $\norm{\cdot}_p$ is the $\ell_p$ norm.  Instead of solving this directly, the sketch-and-solve estimator takes a sketch $\Omega\in\R^{n\times k}$ and gives 
\[
\widetilde x \in \argmin_{x\in\R^d} \norm{\Omega^\top(A x - b)}_p. 
\]

To formulate an OSI-type property of a sketch in this setting, it is helpful to recall what isotropy means in the Euclidean case. When $p=2$, the isotropy condition $\E[\Omega\Omega^\top]=I_n$ says that, on average, the sketch preserves squared Euclidean norm, i.e., 
\[
\E\!\left[\norm{\Omega^\top z}_2^2\right]=\norm{z}_2^2
\qquad \text{for every } z\in\R^n.
\]
For $\ell_p$ regression, the natural analogue is therefore to require preservation of the $p$th power of the $\ell_p$ norm in expectation, that is,
\[
\E\!\left[\norm{\Omega^\top z}_p^p\right]=\norm{z}_p^p.
\] 
This leads to a natural $\ell_p$ version of OSI, and below we show that it yields a constant-factor guarantee for sketch-and-solve $\ell_p$ regression.  

\begin{definition}
Given $s\in\mathbb{N}$, $\alpha\in(0,1]$, and $\rho\in[0,1)$,  we say that $\Omega\in\R^{n\times k}$ is an $(s,\alpha,\rho)$-OSI${}_p$ sketch if it satisfies: 
\begin{enumerate}[label=(\roman*),leftmargin=1.8em]
    \item \textbf{$\mathbf{p}$-isotropy:} $\E\left[\norm{\Omega^\top z}_p^p\right] = \norm{z}_p^p$ for every $z\in\R^n.$
    \item \textbf{Injectivity:} for every fixed $s$-dimensional subspace $V\subseteq\R^n$,
\[
\Prob\!\left\{ \norm{\Omega^\top v}_p^p \ge \alpha\,\norm{v}_p^p \text{ for all } v\in V \right\} \ge 1-\rho.
\]
\end{enumerate}
\end{definition}

The definition of $p$-isotropy is nonvacuous; for example, it is satisfied by the following simple sampling sketch. Fix integers $n,k\ge1$. Let $i_1,\dots,i_k$ be i.i.d.\ uniform random variables on $\{1,\dots,n\}$, and define
\[
\Omega
=
\left[
\left(\frac{n}{k}\right)^{1/p} e_{i_1}\ \cdots\ \left(\frac{n}{k}\right)^{1/p} e_{i_k}
\right]
\in \R^{n\times k}.
\]
Then $\Omega$ is $p$-isotropic, because for every $z\in\R^n$,
\[
\E\!\left[\norm{\Omega^\top z}_p^p\right]
=
\sum_{j=1}^k \E\!\left[\frac{n}{k}|z_{i_j}|^p\right]
=
\sum_{j=1}^k \frac{1}{k}\sum_{\ell=1}^n |z_\ell|^p
=
\norm{z}_p^p.
\]
However, constructing practical sketches that satisfy both $p$-isotropy and the corresponding injectivity property is a more subtle matter. This question is closely related to the literature on $\ell_p$ subspace embeddings and robust regression; see, for example, Sohler--Woodruff~\cite{SohlerWoodruff2011}, Meng--Mahoney~\cite{MengMahoney2013}, Woodruff--Zhang~\cite{WoodruffZhang2013}, Cohen--Peng~\cite{CohenPeng2015}, and the survey by Woodruff~\cite{Woodruff2014}.  We do not have any new ideas on this.

We begin with a deterministic statement, which is the basic mechanism behind the probabilistic result. 

\begin{theorem}\label{thm:deterministic-lp}
Let $A\in\R^{n\times d}$ and $b\in\R^n$.  Choose any optimal solution $x_\star\in\argmin_x \norm{Ax-b}_p$, and write $r_\star = Ax_\star-b$. Suppose that the sketch $\Omega\in\R^{n\times k}$ satisfies:
\begin{enumerate}[label=(\roman*),leftmargin=1.8em]
    \item $\norm{\Omega^\top v}_p^p \ge \alpha\,\norm{v}_p^p$ for every $v\in\range(A)$;
    \item $\norm{\Omega^\top r_\star}_p^p \le \beta\,\norm{r_\star}_p^p$.
\end{enumerate}
Then, 
\[
\norm{A\widetilde x-b}_p \le \left(1 + 2\left(\frac{\beta}{\alpha}\right)^{1/p}\right)\min_{x\in\R^d} \norm{Ax-b}_p.
\]
\end{theorem}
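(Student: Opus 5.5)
The plan is the standard triangle-inequality argument for sketch-and-solve, run in the $\ell_p$ norm. Write $\widetilde r = A\widetilde x - b$, and set $v = A(\widetilde x - x_\star) = \widetilde r - r_\star \in \range(A)$. The idea is to bound $\norm{v}_p$ by $\norm{r_\star}_p$ up to a constant. Then the triangle inequality $\norm{\widetilde r}_p \le \norm{r_\star}_p + \norm{v}_p$ finishes the proof, since $\norm{r_\star}_p = \min_x \norm{Ax-b}_p$.

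First, apply hypothesis (i) to $v\in\range(A)$. Taking $p$th roots gives $\alpha^{1/p}\norm{v}_p \le \norm{\Omega^\top v}_p$. Next, use the triangle inequality in the sketched space:
\[
\norm{\Omega^\top v}_p \le \norm{\Omega^\top \widetilde r}_p + \norm{\Omega^\top r_\star}_p .
\]
Because $\widetilde x$ minimizes the sketched objective, we have $\norm{\Omega^\top\widetilde r}_p \le \norm{\Omega^\top r_\star}_p$, which gives
\[
\norm{\Omega^\top v}_p \le 2\norm{\Omega^\top r_\star}_p \le 2\beta^{1/p}\norm{r_\star}_p
\]
by hypothesis (ii). Combining the two bounds yields $\norm{v}_p \le 2(\beta/\alpha)^{1/p}\norm{r_\star}_p$. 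The triangle inequality then gives
\[
\norm{A\widetilde x - b}_p \le \bigl(1+2(\beta/\alpha)^{1/p}\bigr)\norm{r_\star}_p ,
\]
which is the claim.

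There is no real obstacle; the argument is deterministic and uses only that $\norm{\cdot}_p$ is a norm for $p\ge1$. Two points need a little care. First, we must pass from the $p$th-power hypotheses to norm inequalities via the monotonicity of $t\mapsto t^{1/p}$. Second, we must note that the argument holds for any choice of minimizer $\widetilde x$ (and any $x_\star$), since only the optimality inequality of $\widetilde x$ is used. Existence of minimizers is standard, because we are minimizing a norm over an affine subspace.
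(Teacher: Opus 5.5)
Your proposal is correct and matches the paper's proof step for step. The paper sets $u=A(\widetilde x-x_\star)$, bounds $\alpha^{1/p}\norm{u}_p\le\norm{\Omega^\top u}_p\le 2\beta^{1/p}\norm{r_\star}_p$ using the sketched optimality of $\widetilde x$ and the triangle inequality, and then applies $\norm{r_\star+u}_p\le\norm{r_\star}_p+\norm{u}_p$.
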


\begin{proof}
Set $u=A(\widetilde x-x_\star)\in\range(A)$. Then
$
A\widetilde x-b = r_\star + u.
$
Since $\widetilde x$ minimizes the sketched objective,
\[
\norm{\Omega^\top(r_\star+u)}_p \le \norm{\Omega^\top r_\star}_p.
\]
Taking $p$th roots in assumptions~(i) and~(ii), and then using the triangle inequality, we obtain
\[
\alpha^{1/p}\norm{u}_p
\le \norm{\Omega^\top u}_p
\le \norm{\Omega^\top(r_\star+u)}_p + \norm{\Omega^\top r_\star}_p
\le 2\beta^{1/p}\norm{r_\star}_p.
\]
Therefore
$
\norm{u}_p \le 2\left(\beta/\alpha\right)^{1/p}\norm{r_\star}_p.
$
Finally,
\[
\norm{A\widetilde x-b}_p
= \norm{r_\star+u}_p
\le \norm{r_\star}_p + \norm{u}_p
\le \left(1 + 2\left(\frac{\beta}{\alpha}\right)^{1/p}\right)\norm{r_\star}_p.
\]
The claim follows.
\end{proof}

The point of~\cref{thm:deterministic-lp} is that injectivity on $\range(A)$ controls the perturbation in $\range(A)$,  while a single upper bound on the optimal residual controls the remaining term.  The result in~\cref{thm:deterministic-lp} does not recover the relative error bound for $\ell_2$ regression when $p=2$ because the assumptions in (i) and (ii) are weaker than an OSE property on the sketch.  

In the $p$-OSI setting,  assumption (ii) is supplied in expectation by $p$-isotropy, so a Markov argument yields a probabilistic result. 

\begin{corollary}\label{cor:probabilistic-lp}
Let $A\in\R^{n\times d}$, $b\in\R^n$, and suppose that $\rank(A)\le r$. Let $\Omega$ be an $(r,\alpha,\rho)$-OSI${}_p$. Then, for every $t\ge1$,  the sketch-and-solve estimator satisfies
\[
\Prob\!\left\{ \norm{A\widetilde x-b}_p \le \left(1+2\left(\frac{t}{\alpha}\right)^{1/p}\right) \min_{x\in\R^d} \norm{Ax-b}_p \right\} \ge 1-\rho-t^{-1}.
\]
In particular, if $0<\delta<1$, $\rho\le \delta/2$, and $t=2/\delta$, then
\[
\Prob\!\left\{ \norm{A\widetilde x-b}_p \le \left(1+2\left(\frac{2}{\alpha\delta}\right)^{1/p}\right) \min_{x\in\R^d} \norm{Ax-b}_p \right\} \ge 1-\delta.
\]
\end{corollary}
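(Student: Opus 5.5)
The plan is to reduce \Cref{cor:probabilistic-lp} to the deterministic \Cref{thm:deterministic-lp} by showing that, with high probability, both of its hypotheses hold with $\beta=t$. Fix an optimal solution $x_\star$ and set $r_\star=Ax_\star-b$. If $r_\star=0$, then $b\in\range(A)$. In that case I only need injectivity: on that event $\Omega^\top(A\widetilde x-b)=0$ forces $A\widetilde x-b=0$. So it is convenient to treat the general case uniformly, with (ii) holding trivially when $r_\star=0$.

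\textbf{Step 1 (injectivity on $\range(A)$).} Since $\rank(A)\le r$, I extend $\range(A)$ to an $r$-dimensional subspace $V\supseteq\range(A)$; this requires $r\le n$, which is implicit in the definition. The OSI${}_p$ injectivity property applied to $V$ gives an event $E_1$ with $\Prob(E_1)\ge 1-\rho$ on which $\norm{\Omega^\top v}_p^p\ge\alpha\norm{v}_p^p$ for all $v\in V$, and in particular for all $v\in\range(A)$. This is hypothesis (i) of \Cref{thm:deterministic-lp}.

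\textbf{Step 2 (upper control on the residual).} By $p$-isotropy applied to the fixed vector $z=r_\star$, we have $\E[\norm{\Omega^\top r_\star}_p^p]=\norm{r_\star}_p^p$. Markov's inequality then gives $\Prob\{\norm{\Omega^\top r_\star}_p^p> t\norm{r_\star}_p^p\}\le t^{-1}$ when $r_\star\neq0$; when $r_\star=0$ the event is empty. Call the complement $E_2$. On $E_2$, hypothesis (ii) holds with $\beta=t$.

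\textbf{Step 3 (combine).} A union bound gives $\Prob(E_1\cap E_2)\ge 1-\rho-t^{-1}$. On $E_1\cap E_2$, \Cref{thm:deterministic-lp} yields $\norm{A\widetilde x-b}_p\le(1+2(t/\alpha)^{1/p})\min_x\norm{Ax-b}_p$. Its proof applies to any minimizer $\widetilde x$ of the sketched objective, so nonuniqueness of $\widetilde x$ is harmless. The ``in particular'' statement follows by substituting $t=2/\delta$: then $\rho+t^{-1}\le\delta/2+\delta/2=\delta$.

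The only point needing care is that $r_\star$ must be a fixed, non-random vector for Markov's inequality to apply, and it is, because $x_\star$ is chosen independently of $\Omega$. I do not expect a real obstacle; the argument is a direct Markov-plus-union-bound reduction to the deterministic theorem.
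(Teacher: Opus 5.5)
Your proposal is correct and follows the paper's proof: you extend $\range(A)$ to an $r$-dimensional subspace to get injectivity, apply Markov's inequality to $\norm{\Omega^\top r_\star}_p^p$ using $p$-isotropy, and take a union bound so that \Cref{thm:deterministic-lp} applies with $\beta=t$. Treating the case $r_\star=0$ uniformly, where hypothesis (ii) holds trivially, is only a cosmetic variant of the paper's separate handling of that case.
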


\begin{proof}
Let $r_\star = Ax_\star-b$ be an optimal residual, where $x_\star\in \argmin_{x\in\R^d} \norm{Ax-b}_p$.  If $r_\star = 0$, then $b\in\range(A)$, and on the injectivity event on $\range(A)$ we get exact recovery, so the claim is trivial. Hence, assume $\norm{r_\star}_p>0$. By $p$-isotropy, $\E\|\Omega^\top r_\star\|_p^p = \|r_\star\|_p^p.$
Markov's inequality gives
\[
\Prob\!\left\{ \norm{\Omega^\top r_\star}_p^p > t\,\norm{r_\star}_p^p \right\}
\le
\frac{\E\left[\norm{\Omega^\top r_\star}_p^p\right]}{t\norm{r_\star}_p^p}
=
 t^{-1}.
\]
By the injectivity part of the $(r,\alpha,\rho)$-OSI${}_p$ hypothesis, with probability at least $1-\rho$ we also have\footnote{Since the OSI definition is for every fixed $r$-dimensional subspace,  one technically needs to extend $\range(A)$ to a $r$-dimensional subspace $V$ if ${\rm rank}(A)<r$.  The injectivity event on $V$ implies the same inequality on $\range(A)$.} 
\[
\norm{\Omega^\top v}_p^p \ge \alpha\norm{v}_p^p
\qquad \text{for all } v\in\range(A).
\]
On the intersection of these two events, \Cref{thm:deterministic-lp} applies with $\beta=t$. A union bound yields the claim.
\end{proof}

Thus,  the OSI${}_p$ sketch property guarantees that the sketch-and-solve estimator achieves a constant factor approximation for the $\ell_p$ regression problem.  

\section{Conclusion}
The OSI property introduced in~\cite{CamanoEtAl2025} is strong enough to deliver constant-factor guarantees for randomized linear algebra, but it is too weak on its own to support OSE-style relative-error bounds for sketch-and-solve least squares or randomized SVD. The missing ingredient is upper control on the optimal residual or tail component. When injectivity is strengthened on the relevant augmented subspaces, one recovers near-relative-error bounds.

\section*{Acknowledgments}
We thank the Simons Institute for the Theory of Computing for supporting the workshop on linear systems and eigenvalue problems, where we first learned about the OSI property of a sketch.  We also thank Raphael Meyer for being the scribe for Problems~5.1--5.3 in~\cite{AmselEtAl2026}. During the preparation of this manuscript, A.T. used GPT-5.4 Pro to polish the writing and check the manuscript for errors. GPT-5.4 Pro was also used to write the figure-generation code. This code and all other AI-assisted output were carefully scrutinized and verified by the authors. The authors take full responsibility for the content of the manuscript.

\bibliographystyle{siam}
\bibliography{references}

\end{document}